\newcolumntype{d}{D{.}{.}{-1} } 
\numberwithin{equation}{section}
\newtheorem{teo}{Theorem}[section]
\newtheorem{cor}[teo]{Corollary}
\newtheorem{prop}[teo]{Proposition}
\newtheorem{conj}[teo]{Conjecture}
\theoremstyle{remark}
\newtheorem{remark}[teo]{Remark}
\newcommand{\R}{\mathbb{R}} 
\newcommand{\V}{\mathcal{V}} 
\newcommand{\W}{\mathcal{W}} 
\newcommand{\C}{\mathbb{C}} 
\renewcommand{\H}{\mathcal{H}} 
\renewcommand{\L}{\mathscr{L}} 
\DeclareMathOperator*{\esssup}{ess\,sup}
\title[The Hilbert-Schmidt norm of localization operators]{The quantitative isoperimetric inequality for the Hilbert-Schmidt norm of localization operators}
\author{FABIO NICOLA AND FEDERICO RICCARDI}
\begin{document}

	\keywords{Short-time Fourier transform, time-frequency localization operator, uncertainty principle, quantitative estimate}
	\subjclass[2020]{42B10, 49Q20, 49R05, 81S30, 94A12}
	\maketitle
	\begin{center}
		\it Dedicated to Karlheinz Gr\"ochenig, on the occasion of his 65th birthday
	\end{center}
	\begin{abstract}
		\noindent In this paper we study the Hilbert-Schmidt norm of time-frequency localization operators $L_{\Omega} \colon L^2(\R^d) \rightarrow L^2(\R^d)$, with Gaussian window, associated with a subset $\Omega\subset\R^{2d}$ of finite measure. We prove, in particular, that the Hilbert-Schmidt norm of $L_\Omega$ is maximized, among all subsets $\Omega$ of a given finite measure, when $\Omega$ is a ball and that there are no other extremizers. Actually, the main result is a quantitative version of this estimate, with sharp exponent. A similar problem is addressed for wavelet localization operators, where rearrangements are understood in the hyperbolic setting.
	\end{abstract}
	
	\section{Introduction}
	The short-time Fourier transform (STFT) of a function $f \in L^2(\R^d)$ with respect to a window $\varphi \in L^2(\R^d)$ is defined (see e.g. Gr\"ochenig's book \cite{grochenig}) as
	\begin{equation}\label{eq STFT}
		\V_{\varphi} f(x,\omega) = \int_{\R^d} f(y) \overline{\varphi(y-x)} e^{- 2 \pi i \omega \cdot y} \, dy, \quad (x,\omega) \in \R^{d}\times\R^{d}.
	\end{equation}
	A common choice for the window is the $L^2$-normalized Gaussian, that is
	\begin{equation*}
		\varphi(x) = 2^{d/4} e^{- \pi |x|^2}, \quad x \in \R^d.
	\end{equation*}
	In this note we will always consider this window and therefore we will simply set $\V=\V_{\varphi}$.
	
	Since we chose $\varphi$ normalized in $L^2(\R^d)$, we have that $\V : L^2(\R^d) \rightarrow L^2(\R^{2d})$ becomes an isometry. Hence, if $\|f\|_{2}=1$, the quantity $|\V f(x,\omega)|^2$, known as spectrogram, can be interpreted as the \emph{time-frequency energy density} of $f$ around the point $(x,\omega)$ in the time-frequency space. With this in mind, it is clear why having good and meaningful estimates for the short-time Fourier transform (and in particular for the spectrogram) has always been of great importance both from a theoretical and practical point of view. One of the first and at the same time most important results in this sense was obtained by Lieb in 1978 \cite{lieb_entropy} and is known today as Lieb's uncertainty inequality, namely
	\begin{equation}\label{eq lieb_intro}
		\| \V f \|_p^p \leq \left(\frac{2}{p}\right)^d \|f\|_2^p
	\end{equation}
	for every $f \in L^2(\R^d)$ and $2\leq p <\infty$ (see also \cite{carlen} for the identification of the extremal functions, and \cite{lieb1990} for generalizations). Lieb's inequality is a global estimate. In the spirit of uncertainty principles, we may be interested also in local estimates, that is, for some $\Omega \subset \R^{2d}$ with finite Lebesgue measure, finding bounds of the quantity
	\begin{equation*}
		\frac{\int_{\Omega} |\V f (x,\omega)|^2 \, dxd\omega}{\|f\|_2^2},
	\end{equation*}
	which represents the fraction of energy of $f$ contained in $\Omega $. The above integral can be written in an equivalent way as follows
	\begin{equation*}
		\int_{\Omega} |\V f (x,\omega)|^2 \, dx\,d\omega = \langle \chi_{\Omega} \V f, \V f \rangle = \langle \V^* \chi_{\Omega} \V f, f \rangle, 
	\end{equation*}
	where the operator 
	\[
	L_\Omega:=\V^* \chi_{\Omega} \V
	\]
	 naturally appears. This interpretation reveals a connection between time-frequency energy concentration estimates and the properties of the operator $L_\Omega$. In particular, since $\Omega$ has finite measure, $L_\Omega$ is a compact self-adjoint nonnegative operator (see e.g. \cite{wong}) and therefore its operator norm is given by
	\begin{equation*}
		\|L_\Omega\|=\max_{f \in L^2(\R^d) \setminus \{0\}} \dfrac{\langle \V^* \chi_{\Omega} \V f, f \rangle}{\|f\|_2^2}.
	\end{equation*}
	Hence, maximizing the norm of $\V^* \chi_{\Omega} \V$ corresponds to maximize the energy fraction of any function $f \in L^2(\R^d)\setminus\{0\}$ on $\Omega$. In this connection, Tilli and the first author \cite{nicolatilli_fk} recently proved that, among all subsets $\Omega$ of a given finite measure, $\|L_\Omega\|$ is maximum when $\Omega$ is a ball and that there are no other extremizers (a more general conjecture of Abreu and Speckbacher \cite{abreu2021donoho} was also proved in \cite{nicolatilli_fk}). We address to \cite{frank2023sharp,kalaj2,kalaj1,kulikov,ortega,ramos-tilli} for extensions of this result to other geometries -notably the hyperbolic and spherical one- and  for applications in complex analysis. See also \cite{grochenigLCA,nicolaLCA,nicola2023uncertainty} for similar problems on locally compact Abelian groups and to \cite{abreu-doerfler,abreu2021donoho,dias,grochenig2003,trapasso,seip} for related work.
	 
	In general, one can also measure the time-frequency concentration by a weighted $L^2$-norm, hence considering, for a function $F \colon \R^{2d} \rightarrow \C$, the so-called \emph{time-frequency localization operator}
	\begin{equation*}
		L_{F} \coloneqq \V^* F \V
	\end{equation*}
(hence $L_{\chi_\Omega}=L_\Omega$, with a slight abuse of notation). Since their first appearance in \cite{berezin} and \cite{daubechies}, time-frequency localization operators were intensively studied; see, for example, \cite{abreu, cordero, daubechies, fernandez, luef, wong} and the references therein for general results concerning boundedness, compactness, Schatten properties and asymptotics of the eigenvalues. Also, Lieb's uncertainty inequality \eqref{eq lieb_intro} can be equivalently rephrased, by duality, as 
\[
\|L_F\|\leq (1/p')^{d/p'}\|F\|_p, \qquad 1<p<\infty,
\]
 $p'$ being the conjugate exponent. Similar estimates in case the weight $F$ is taken in the intersection of Lebesgue spaces, with a full characterization of the extremal functions, were recently considered in 
 \cite{galbis2022norm,nicolatilli_norm, riccardi2023}. 
 	
	In this paper we address similar problems for the Hilbert-Schmidt norm of time-frequency localization operators, especially of the kind $L_{\Omega}$. An initial result, which follows from Riesz' rearrangement inequality, states that
	\begin{equation}\label{eq : introduction qualitative estimate}
		\| L_{\Omega} \|_{\mathrm{HS}} \leq \| L_{\Omega^*} \|_{\mathrm{HS}},
	\end{equation}
	where $\Omega^*$ is the (open) ball centered at $0$, with the same measure as $\Omega$, and that equality occurs if and only if $\Omega$ is (equivalent, up to a set of measure zero, to) a ball (see Proposition \ref{prop : HS norm is increasing after rearrangement}). In Section \ref{sec : HS norm of wavelet localization operators} we also prove an analogous result for wavelet localization operators, and also for general localization operators $L_F$.
	
	However, our interest is towards a \emph{quantitative} version of the previous estimate. In general, quantitative estimates are stability results for geometric and functional inequalities stating that if a function is ``almost optimal'' for some inequality then it must be ``close'' to the set of the corresponding optimizers. This kind of results have been proved for lots of different inequalities, such as the isoperimetric inequality, Sobolev and Gagliardo-Nirenberg inequality. For a comprehensive survey on the topic, see \cite{figalli2013stability} and the references therein.
	
	Only recently, quantitative estimates have been addressed for certain time-frequency concentration problems. Precisely the quantitative version for the above mentioned Faber-Krahn type result \cite{nicolatilli_fk} was addressed in \cite{gomez2023stability}, whereas the quantitative version of Lieb's uncertainty inequality \eqref{eq lieb_intro} (and for the generalized Wehrl entropy of mixed states) was proved in  \cite{frank2023generalized}.
	
In this note we want to fit into this thread by focusing on the following question:
	\begin{center}
		{\it If a set $\Omega \subset \R^{2d}$ ``almost'' attains equality in \eqref{eq : introduction qualitative estimate}	\\ can we conclude that $\Omega$ is ``almost'' a ball?}
	\end{center}
	The answer is positive and is given in Proposition \ref{prop : quantitative estimate for HS norm}, where the following quantitative bound is stated:
	\begin{equation}\label{eq : intro quantitative estimate}
		c_1 \beta(|\Omega|) \alpha[\Omega]^2 \leq \| L_{\Omega^*} \|_{\mathrm{HS}}^2 - \|L_{\Omega}\|_{\mathrm{HS}}^2,
	\end{equation}
	where $\beta$ is given by
	\begin{equation*}
		\beta(t) = \left\{
		\begin{aligned}
			t^{2+\frac{1}{d}},\quad	     & \text{for } 0 < t \le 1\\
			t^2 e^{-c_2 t^{1/d}}, \quad  & \text{for } t > 1
		\end{aligned}\right.
	\end{equation*}
	with $c_1,c_2 > 0$, and $\alpha[\Omega]$ is the \emph{Fraenkel asymmetry index}, which is defined as
	\begin{equation}\label{eq fraenkel}
		\alpha [\Omega] \coloneqq \inf \left\{ \dfrac{| \Omega \triangle B|}{|\Omega|} \colon B \subset \R^{2d} \text{ is a ball of measure } |B| = |\Omega| \right\}.
	\end{equation}
Here $\Omega \triangle B = (\Omega \setminus B) \cup (B \setminus \Omega)$ is the symmetric difference of $\Omega$ and $B$. We observe that $\alpha[\Omega]$ is a dimensionless quantity, and that the above infimum is achieved by some (not necessarily unique) ball.  

In our proof we use a quantitative version of Riesz' rearrangement inequality proved by Christ in \cite{christ2017sharpened} and we follow the strategy used by Frank and Lieb in \cite{frank2019proof} to address an optimization problem for the potential energy functional in $\R^d$ with interaction kernel $|x|^{-\lambda}$, $0< \lambda < d$. This connection with physically relevant problems is actually not that surprising, since our issue can be seen as a similar problem for a potential energy with Gaussian interaction (cf. \eqref{eq : formula HS norm localization operators with characteristic function} below). We observe that this type of isoperimetric problems dates back at least to Poincaré \cite{poincare} and still represents a challenging and very active research field (see, for example, \cite{burchard2015geometric, burchard2018nonlocal, figalli2015isoperimetry, frank2019proof} and the references therein).
	
	In Section \ref{sec : some remarks} we analyze the optimality of \eqref{eq : intro quantitative estimate} and in particular we prove that the exponent 2 of $\alpha[\Omega]$ and the behavior of $\beta(t)$ for $t \to 0^+$ are sharp, while for $t \to +\infty$ we conjecture that the estimate actually could hold with $\beta(t) = t^{2-1/2d}$ (Conjecture \ref{conj 1}). This leads us to another conjecture (Conjecture \ref{conj 2}), that is a refinement of Christ's result and seems of independent interest.
	
We now know, in particular, that both the operator norm and the Hilbert-Schmidt norm of $L_\Omega$ are maximized, among all subsets $\Omega$ of a given finite measure, when $\Omega$ is a ball. It is natural to wonder whether the same holds for other Schatten-von Neumann norms. We plan to investigate this issue, together with the above mentioned conjectures, in a subsequent work. 
	
	\section{Notation and preliminaries}\label{sec : notation and preliminaries}
		In the following, we are going to denote the ball with center 0 and radius $r$ in $\R^d$ or $\R^{2d}$ (depending on the context) as $B_r$. The $d$-dimensional Lebesgue measure of a subset $\Omega \subset \R^d$ will be denoted as $|\Omega|$. We set $\|f\|_p$ for the $L^p$ norm of a function $f$. The Fourier transform of a function $f$ will be denoted by $\widehat{f}$.
		
		Given a subset $\Omega \subset \R^d$ of finite measure, we denote by $\Omega^*$ the \emph{symmetric rearrangement of the set} $\Omega$, that is the open ball with center 0 and such that $|\Omega|=|\Omega^*|$. In the spirit of the layer cake representation, given a nonnegative function $f$ on $\R^d$ we can also define the \emph{symmetric-decreasing rearrangement of} $f$ as
		\begin{equation*}
			f^*(x) = \int_0^{+\infty} \chi_{\{|f|>t\}^*}(x) \, dt.
		\end{equation*}
		The symmetric-decreasing rearrangement appears in various optimization problems and inequalities. In the following, we are going to use one of the main results in this sense, that is the \emph{Riesz' rearrangement inequality}.
		\begin{teo}[Riesz' rearrangement inequality]{(\cite[Theorem 3.7, Theorem 3.9]{liebloss})}\label{th : Riesz rearrangement inequality}
			Let $f,g$ and $h$ be three nonnegative measurable functions on $\R^d$. Then we have
			\begin{equation}\label{eq : Riesz rearrangement inequality}
				\int_{\R^d \times \R^d} f(x) g(x-y) h(y) \, dx dy \leq \int_{\R^d \times \R^d} f^*(x) g^*(x-y) h^*(y) \, dx dy,
			\end{equation}
			with the understanding that if the left-hand side is $+\infty$ then also the right-hand side is. If, in addition, $g$ is strictly symmetric decreasing and $f$ and $h$ are not zero and the above integrals are finite, equality occurs if and only $f(x) = f^*(x-y)$ and $h(x)=h^*(x-y)$ for almost every $x\in\mathbb{R}^d$ and some $y \in \R^d$.
		\end{teo}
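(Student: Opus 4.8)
The plan is to reduce everything to indicator functions and then to exploit symmetrization. By the layer-cake representation $f(x)=\int_0^\infty\chi_{\{f>s\}}(x)\,ds$ (and similarly for $g,h$), together with the identity $\chi_{\{f>s\}}^*=\chi_{\{f^*>s\}}$, Tonelli's theorem expands the trilinear functional
\[
I(f,g,h):=\int_{\R^d\times\R^d}f(x)\,g(x-y)\,h(y)\,dx\,dy
\]
into a triple integral over the levels $s,t,u>0$ of the set functionals $I(\chi_{A_s},\chi_{B_t},\chi_{C_u})$, where $A_s=\{f>s\}$, and so on. Hence it suffices to treat characteristic functions of sets $A,B,C\subset\R^d$ of finite measure, writing $I(A,B,C)=\int_A\int_C\chi_B(x-y)\,dy\,dx$.

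For the inequality I would argue by Steiner symmetrization, with the one-dimensional statement as the base case. In dimension one, $I(A,B,C)\le I(A^*,B^*,C^*)$ for $A,B,C\subset\R$ can be obtained by approximating the sets from inside by finite unions of intervals and running an exchange argument that slides intervals toward the origin while checking that the triple overlap never decreases; this elementary but computational step is the analytic core. To pass to $\R^d$, fix a direction $e=e_1$ and split $x=(x_1,x')$, $y=(y_1,y')$: for fixed $x',y'$ the inner integral in $(x_1,y_1)$ is precisely a one-dimensional Riesz functional of the fibers $A_{x'}$, $B_{x'-y'}$, $C_{y'}$, so the one-dimensional inequality applied fiberwise and integrated over $(x',y')$ shows that simultaneously Steiner-symmetrizing $A,B,C$ in the direction $e$ does not decrease $I$.

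I would then iterate. After truncating to bounded sets (justified by monotone convergence), I appeal to the classical fact that a common sequence of Steiner symmetrizations drives finitely many bounded sets simultaneously to their spherical rearrangements in $L^1$, combined with the continuity of $I$ under $L^1$-convergence of the sets. Since $I$ is nondecreasing along the symmetrization sequence and its limit equals $I(A^*,B^*,C^*)$, the inequality follows; removing the truncation, using that $f_n\le f$ implies $f_n^*\le f^*$, then recovers the general (possibly infinite) case.

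The delicate part is the equality statement, which is the substance of Burchard's theorem underlying \cite[Theorem 3.9]{liebloss}. The ``if'' direction is immediate: if $f=f^*(\cdot-y)$ and $h=h^*(\cdot-y)$ for a common $y$, then since $g=g^*$ is radial a change of variables gives $I(f,g,h)=I(f^*,g^*,h^*)$. For the converse, the assumptions that all integrals are finite and that $g$ is strictly symmetric decreasing let me run the reduction backwards: equality of the two triple integrals forces, for almost every level triple $(s,t,u)$, the set identity $I(A_s,B_t,C_u)=I(A_s^*,B_t^*,C_u^*)$ with $B_t$ an honest ball. The heart of the matter is then the rigidity of the set inequality when $B_t$ is a ball of positive measure, namely that $A_s$ and $C_u$ must themselves be balls up to a common translation; strict monotonicity of $g$ is what lets me choose this translation independently of the level and thus conclude $f=f^*(\cdot-y)$ and $h=h^*(\cdot-y)$ almost everywhere. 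Establishing that set-level rigidity, ruling out the degenerate near-equality configurations left open by the soft symmetrization scheme, is the step I expect to be the main obstacle, and the one requiring Burchard's finer analysis rather than Steiner symmetrization alone.
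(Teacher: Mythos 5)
This theorem is quoted in the paper from Lieb--Loss (Theorems 3.7 and 3.9) without proof, so there is no argument of the authors to compare against; your outline is essentially the standard proof from that reference (layer-cake reduction to characteristic functions, the one-dimensional sliding argument, fiberwise Steiner symmetrization and convergence of iterated symmetrizations, with the equality case resting on the rigidity analysis of Lieb--Loss and Burchard). The sketch is sound as a roadmap, but the two steps you yourself flag --- the convergence of a universal sequence of Steiner symmetrizations applied simultaneously to all three sets, and the set-level rigidity in the equality case --- are precisely where all the work lies, so as written this is a correct summary of the known proof rather than a self-contained argument.
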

		Rearrangements are possible not only in the Euclidean setting, but also for other geometries. In the following, we will consider the Poincaré upper half-plane $\R \times \R_+ = \{(x,s) \in \R^2 \colon s>0\} \simeq \C_+$, endowed with the hyperbolic distance
		\begin{equation*}
			d_H(z,w) = 2 \, \mathrm{arctanh} \left\lvert \dfrac{z-w}{z-\overline{w}} \right\rvert, \quad z,w \in \C_+
		\end{equation*}
		and the hyperbolic measure given by $d\nu = dxds/s^2$, that is the left Haar measure of $\R \times \R_+$ regarded as the affine (``$ax+b$'') group. Recalling that the unit of the group is $(0,1)$ we can easily define the symmetric rearrangement of a subset $E \subset \R \times \R_+$, like in the Euclidean case, as the ball $E^* = \{z \in \C_+ \colon \varrho(z, (0,1)) < r\}$, where $r$ is chosen so that $\nu(E) = \nu(E^*)$. We point out that hyperbolic balls with center $(0,1)$ and radius $R$ are, as subsets of $\R^2$, Euclidean balls with center $(0,\cosh(R))$ and radius $\sinh(R)$. Then, given a nonnegative measurable function $f$ on $\C_+$ we can define its symmetric decreasing rearrangement exactly as we have done in the Euclidean case.
		
		In Section \ref{sec : HS norm of wavelet localization operators} we are going to need the hyperbolic version of Theorem \ref{th : Riesz rearrangement inequality}, which holds with the proper adjustments also in the hyperbolic setting (see \cite[Section 7.6]{baernstein2019symmetrization}).
		\begin{teo}\label{th : Riesz rearrangement inequality hyperbolic}
			Let $f,h$ be two nonnegative measurable functions on $\C_+$ and let $g \colon [0,+\infty) \rightarrow [0,+\infty)$ be decreasing. Then we have
			\begin{equation*}
				\int_{\C_+ \times \C_+} f(z) g(d_H(z,w)) h(w) \, d\nu(z) d\nu(w) \leq \int_{\C_+ \times \C_+} f^*(z) g(d_H(z,w)) h^*(w) \, d\nu(z) d\nu(w),
			\end{equation*}
			with the understanding that if the left-hand side is $+\infty$ then also the right-hand side is. If, in addition, $g$ is strictly symmetric decreasing and $f$ and $h$ are not zero and the above integrals are finite, equality occurs if and only $f(z) = f^*(az+b)$ and $h(z)=h^*(az+b)$ for almost every $z \in \C_+$ and some $a>0$ and $b \in\R$.
		\end{teo}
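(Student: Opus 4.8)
The plan is to prove the inequality by the method of two-point symmetrization (polarization), which is the robust route to Riesz-type inequalities on spaces carrying an isometric reflection structure, and which underlies the treatment in \cite[Section 7.6]{baernstein2019symmetrization}. The geometric facts that make the method work in $\C_+$ are: through every point passes a hyperbolic geodesic $H$; the reflection $\sigma_H$ across $H$ is an isometry of the hyperbolic metric and preserves the measure $\nu$; and, writing $H_+$ for the half-plane bounded by $H$ containing the center $(0,1)$, one has the monotonicity
\begin{equation*}
  d_H(z,w)\le d_H(z,\sigma_H w),\qquad d_H(\sigma_H z,\sigma_H w)=d_H(z,w),\qquad d_H(z,\sigma_H w)=d_H(\sigma_H z,w)
\end{equation*}
for all $z,w\in H_+$. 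For a function $F$ the polarization relative to $H$ is $F^H(z)=\max\{F(z),F(\sigma_H z)\}$ for $z\in H_+$ and $F^H(z)=\min\{F(z),F(\sigma_H z)\}$ otherwise.

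First I would establish the single-geodesic inequality
\begin{equation*}
  \int_{\C_+\times\C_+} f(z)\,g(d_H(z,w))\,h(w)\,d\nu(z)\,d\nu(w)\le \int_{\C_+\times\C_+} f^H(z)\,g(d_H(z,w))\,h^H(w)\,d\nu(z)\,d\nu(w).
\end{equation*}
Since $\sigma_H$ preserves $\nu$, the double integral can be written as an integral over $H_+\times H_+$ of the sum over the four configurations in the orbit $\{z,\sigma_H z\}\times\{w,\sigma_H w\}$. On each orbit the inequality reduces to an elementary four-term rearrangement estimate: using that $g$ is decreasing (so $g(d_H(z,w))\ge g(d_H(z,\sigma_H w))$ by the monotonicity above) together with the symmetries $g(d_H(z,w))=g(d_H(\sigma_H z,\sigma_H w))$ and $g(d_H(z,\sigma_H w))=g(d_H(\sigma_H z,w))$, one checks that replacing $(f,h)$ by $(f^H,h^H)$ does not decrease the orbit sum. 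Integrating over $H_+\times H_+$ yields the claim.

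Next I would upgrade this to the full inequality by approximation. A common sequence of polarizations, each relative to a geodesic whose positive half-plane contains $(0,1)$, can be chosen so that the iterates drive both $f$ and $h$ to their symmetric decreasing rearrangements $f^*,h^*$ in measure (after the usual truncation reducing to bounded, compactly supported data). Since each polarization leaves the functional nondecreasing and leaves $\int F\,d\nu$ unchanged, passing to the limit via Fatou and dominated convergence gives the inequality, with the convention that a divergent left-hand side forces the right-hand side to diverge as well.

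The hard part will be the equality analysis. When $g$ is strictly decreasing, equality in the single-geodesic step forces, on almost every orbit, that $(f,h)$ is already ordered as $(f^H,h^H)$, i.e.\ $f^H=f$ and $h^H=h$ up to null sets, for the relevant geodesics. The delicate point is then to promote ``$f$ and $h$ are invariant under almost every admissible polarization'' to ``$f$ and $h$ are symmetric decreasing about a common center'': one must rule out the possibility that $f$ and $h$ are symmetrized about different centers, and it is precisely the strict monotonicity of $g$ and the hypothesis that $f,h$ are not identically zero that close this gap. Carrying this out requires the characterization of functions invariant under a rich family of polarizations, as in \cite[Section 7.6]{baernstein2019symmetrization}; the outcome is that equality holds if and only if $f(z)=f^*(az+b)$ and $h(z)=h^*(az+b)$ for a common $a>0$ and $b\in\R$, these maps being precisely the elements of the $ax+b$ group acting simply transitively on $\C_+$.
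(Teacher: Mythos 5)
The paper does not actually prove Theorem \ref{th : Riesz rearrangement inequality hyperbolic}: it is imported wholesale from \cite[Section 7.6]{baernstein2019symmetrization}, so the only fair comparison is with the proof in that reference --- and your polarization (two-point symmetrization) scheme is precisely the method used there. The inequality half of your sketch is sound: the reflection $\sigma_H$ across a geodesic is a $\nu$-preserving isometry, the claimed distance monotonicity for points on the same side of $H$ holds, and the orbit-wise four-term estimate is exactly the identity
\[
(g_1-g_2)\bigl[(F_1'K_1'+F_2'K_2')-(F_1K_1+F_2K_2)\bigr]\ge 0,
\qquad g_1\ge g_2,
\]
with $F_i',K_i'$ the sorted values. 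Two points, however, are asserted rather than proved, and they are where the real work lies. First, the existence of a sequence of polarizations driving $f$ and $h$ simultaneously to $f^*$ and $h^*$ is not automatic; one needs either the Baernstein--Taylor/Brock--Solynin compactness argument or a random-polarization scheme, together with the reduction to bounded, compactly supported data and a correct limiting argument (Fatou alone does not give the inequality in the right direction on the rearranged side; one uses that the functional is nondecreasing along the sequence and continuous under the relevant convergence). Second, and more seriously, the equality case is essentially deferred: from equality in the limit you must first deduce $I(f^H,h^H)=I(f,h)$ for \emph{every} admissible geodesic $H$ (by applying the already-established inequality to the polarized pair), then extract from the strict four-point inequality that for each such $H$ either $f^H=f$, $h^H=h$ a.e.\ or $f^H=f\circ\sigma_H$, $h^H=h\circ\sigma_H$ a.e., and finally run a connectedness argument over the family of geodesics to force a \emph{common} center of symmetry for $f$ and $h$ --- this last step is exactly where the hypotheses that $g$ is strictly decreasing and $f,h\not\equiv 0$ enter, and it is the part you have only named, not carried out. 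As a blind reconstruction of the cited proof your outline is on the right track, but it is a roadmap rather than a proof, particularly for the rigidity statement.
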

		Observe that $az+b$ is just the product (in the affine group) of $(b,a)$ and $z$. 
	\section{Hilbert-Schmidt norm of localization operators}
	\subsection{A general setting for localization operators}
	In this section we define localization operators in a general setting. 
	
	Given a $\sigma$-finite measure space $(X, \mu)$ and a separable Hilbert space $\H$, with norm $\| \cdot \|$ induced by the inner product $\langle \cdot , \cdot \rangle$, we suppose to have a map $ X\ni x \mapsto \varphi_x \in \H$ such that:\medskip
	\begin{enumerate}[label*=(\roman{*})]
		\item\label{coherent state map is weakly measurable} the map $X\ni x \mapsto \langle f, \varphi_x \rangle$ is measurable for every $f \in \H$;\medskip
		\item\label{coherent state map is bounded} $\| \varphi_x \| \leq c_1$ for a.e. $x \in X$ and some constant $c_1 > 0$.
	\end{enumerate}
	\medskip
	Then, for $F \in L^1(X)$, we consider the corresponding localization operator $L_F \colon \H \rightarrow \H$ defined (weakly) as
	\begin{equation}\label{eq : definition localization operator}
		\langle L_F f, g \rangle = \int_X F(x) \langle f,\varphi_x \rangle \langle \varphi_x,g \rangle \, d\mu(x), \quad f,g \in \H.
	\end{equation}
	From \ref{coherent state map is bounded} and since we supposed $F \in L^1(X)$ it is immediate to see that $L_F$ is bounded and that $\| L_F \| \leq c_1^2 \| F \|_{L^1(X)}$. In fact, it turns out that $L_{F}$ is a trace class operator.
	\begin{prop}\label{prop : formula for trace and HS norm with F in L1}
		Under the above assumptions $L_F$ is trace class and its trace is given by
		\begin{equation}\label{eq : formula trace}
			\mathrm{tr} L_F = \int_X F(x) \| \varphi_x \|^2 \, d\mu(x).
		\end{equation}
		Moreover, for its Hilbert-Schmidt norm we have the formula
		\begin{equation}\label{eq : formula norm HS}
			\| L_F \|_{\mathrm{HS}}^2 = \int_{X \times X} F(x) | \langle \varphi_x, \varphi_y \rangle |^2 \overline{F(y)} \, d\mu(x) d\mu(y).
		\end{equation}
	\end{prop}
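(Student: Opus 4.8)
The plan is to realize $L_F$ as an operator-valued integral of rank-one operators and then obtain both formulas by passing the trace, and the Hilbert--Schmidt inner product, through this integral. For $x\in X$ write $P_x$ for the rank-one operator $f\mapsto\langle f,\varphi_x\rangle\varphi_x$, so that the weak definition \eqref{eq : definition localization operator} reads $\langle L_F f,g\rangle=\int_X F(x)\langle P_x f,g\rangle\,d\mu(x)$; heuristically $L_F=\int_X F(x)P_x\,d\mu(x)$. Each $P_x$ is a positive rank-one operator with trace-class norm $\|P_x\|_{S_1}=\operatorname{tr}P_x=\|\varphi_x\|^2\le c_1^2$ by \ref{coherent state map is bounded}, so the integrand is trace-class with controlled norm.

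First I would make this integral rigorous as a Bochner integral in the (separable) Banach space $S_1$ of trace-class operators. Since $\H$ is separable, assumption \ref{coherent state map is weakly measurable} and the Pettis measurability theorem give that $x\mapsto\varphi_x$ is strongly measurable; composing with the continuous map $\psi\mapsto\psi\otimes\psi$ from $\H$ to $S_1$ (where $\psi\otimes\eta$ denotes $f\mapsto\langle f,\eta\rangle\psi$) shows that $x\mapsto P_x$ is strongly measurable into $S_1$. Assumption \ref{coherent state map is bounded} together with $F\in L^1(X)$ yields $\int_X\|F(x)P_x\|_{S_1}\,d\mu(x)\le c_1^2\|F\|_{L^1(X)}<\infty$, so the Bochner integral $\int_X F(x)P_x\,d\mu$ exists in $S_1$. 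Testing against $f,g$ and using that $T\mapsto\langle Tf,g\rangle$ is a bounded linear functional on $S_1$ (hence commutes with the integral) identifies this Bochner integral with $L_F$; in particular $L_F$ is trace-class. As the trace is bounded and linear on $S_1$, it likewise commutes with the integral, and $\operatorname{tr}P_x=\|\varphi_x\|^2$ gives \eqref{eq : formula trace}.

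For the Hilbert--Schmidt norm I would use $\|L_F\|_{\mathrm{HS}}^2=\operatorname{tr}(L_F^*L_F)$, which is finite since trace-class operators are Hilbert--Schmidt. A short computation from the weak definition gives $L_F^*=L_{\overline F}=\int_X\overline{F(y)}P_y\,d\mu(y)$. I would then invoke the Fubini-type fact that a continuous bilinear map commutes with Bochner integration, applied to operator composition $S_1\times S_1\to S_1$, to write $L_F^*L_F=\int_{X\times X}\overline{F(y)}F(x)\,P_yP_x\,d\mu(x)\,d\mu(y)$ (the double integral being absolutely convergent by the norm bound above). A direct computation yields $P_yP_x=\langle\varphi_x,\varphi_y\rangle\,(\varphi_y\otimes\varphi_x)$, so $\operatorname{tr}(P_yP_x)=|\langle\varphi_x,\varphi_y\rangle|^2$. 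Passing the trace through the integral then produces exactly \eqref{eq : formula norm HS}.

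The main obstacle is the measure-theoretic bookkeeping rather than any of the algebra: one must carefully establish strong measurability of $x\mapsto P_x$ into the separable space $S_1$ (where separability of $\H$ is essential) and justify the interchange of operator composition with the double Bochner integral. Once separability is exploited these are standard, and the remaining ingredients---the rank-one identities for $P_yP_x$ and the commutation of $\operatorname{tr}$ with the integral---are then routine.
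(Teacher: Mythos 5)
Your proof is correct, but it takes a genuinely different route from the paper's. The paper works entirely with scalar measure theory: it fixes an orthonormal basis $(f_j)$, computes $\sum_j\langle L_Ff_j,f_j\rangle$ and $\sum_j\|L_Ff_j\|^2$ directly from the weak definition \eqref{eq : definition localization operator}, and justifies the interchanges of summation and integration by Tonelli (after reducing to nonnegative $F$ for the trace-class claim) and by dominated convergence via the Cauchy--Schwarz bound $\sum_j|\langle f_j,\varphi_x\rangle\overline{\langle f_j,\varphi_y\rangle}|\le\|\varphi_x\|\,\|\varphi_y\|\le c_1^2$, together with the Parseval identity $\sum_j\langle f_j,\varphi_x\rangle\overline{\langle f_j,\varphi_y\rangle}=\langle\varphi_y,\varphi_x\rangle$. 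You instead realize $L_F$ as an $S_1$-valued Bochner integral $\int_X F(x)\,\varphi_x\otimes\varphi_x\,d\mu(x)$ and push the trace and the operator composition through the integral. Your approach buys a cleaner structural statement (trace-class membership and the bound $\|L_F\|_{S_1}\le c_1^2\|F\|_{L^1}$ come for free, for complex $F$, without the positive/negative decomposition the paper's trace argument implicitly relies on), at the cost of having to invoke Pettis measurability and separability of $\H$ to get strong measurability of $x\mapsto\varphi_x\otimes\varphi_x$ into $S_1$, and a Fubini theorem for vector-valued integrals; the paper's computation is more elementary and needs only measurability of the scalar functions $x\mapsto\langle f,\varphi_x\rangle$. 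All the individual steps you cite (continuity of $\psi\mapsto\psi\otimes\psi$ into $S_1$, $L_F^*=L_{\overline F}$, $P_yP_x=\langle\varphi_x,\varphi_y\rangle\,\varphi_y\otimes\varphi_x$, $\operatorname{tr}(P_yP_x)=|\langle\varphi_x,\varphi_y\rangle|^2$) are correct, and the $\sigma$-finiteness of $\mu$ assumed in the paper is exactly what you need for the product-measure step, so there is no gap.
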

	\begin{proof}
		We start proving that $L_F$ is a trace class operator and that \eqref{eq : formula trace} holds true. We can suppose, without loss of generality, that $F$ is nonnegative. 
		
		Given an orthonormal basis $f_j$, $j=1,2,\ldots,$ of $\H$, we have
		\begin{align*}
			\sum_{j=1}^{\infty} \langle L_F f_j , f_j \rangle 
			&=\int_X F(x) \sum_{j=1}^{\infty} | \langle f_j, \varphi_x \rangle |^2 \, d\mu(x) \\
			&= \int_X F(x) \| \varphi_x \|^2 \, d\mu(x) \overset{\ref{coherent state map is bounded}}{\leq} c_1^2 \int_X F(x) d \mu(x) = c_1^2 \| F \|_{L^1(X)},
		\end{align*}
		where the exchange between summation and integral is allowed since everything is positive. 
		
		Now we prove \eqref{eq : formula norm HS}. We have
		\begin{align*}
			\| L_F \|_{\mathrm{HS}}^2 &= \sum_{j=1}^{\infty} \langle L_F f_j, L_F f_j \rangle \\
			&= \sum_{j=1}^{\infty} \int_X F(x) \langle f_j, \varphi_x \rangle \langle \varphi_x, L_F f_j \rangle \, d\mu(x) \\
			&= \sum_{j=1}^{\infty} \int_X F(x) \langle f_j, \varphi_x \rangle \overline{ \int_X F(y) \langle f_j, \varphi_y \rangle \langle \varphi_y, \varphi_x \rangle \, d\mu(y) } \, d\mu(x) \\
			&= \sum_{j=1}^{\infty} \int_{X \times X} \langle f_j, \varphi_x \rangle \overline{\langle f_j, \varphi_y \rangle} \langle \varphi_x, \varphi_y \rangle F(x) \overline{F(y)} \, d\mu(x) d\mu(y),
		\end{align*}
		and the desired results follows by exchanging the summation and the integrals and noticing that
		\begin{equation*}
			\sum_{j=1}^{\infty} \langle f_j, \varphi_x \rangle \overline{\langle f_j, \varphi_y \rangle} = \langle \varphi_y, \varphi_x \rangle.
		\end{equation*}
		This exchange is justified since, by the Cauchy-Schwarz inequality,
		\begin{equation*}
			\sum_{j=1}^{\infty} | \langle f_j ,\varphi_x \rangle \overline{\langle f_j, \varphi_y \rangle} | \leq \| \varphi_x \| \| \varphi_y \| \overset{\ref{coherent state map is bounded}}{\leq} c_1^2,
		\end{equation*}
		and therefore
		\begin{align*}
			\sum_{j=1}^{\infty} \int_{X \times X} |\langle f_j ,\varphi_x \rangle| |\langle f_j, \varphi_y \rangle| |\langle \varphi_x, \varphi_y \rangle F(x) F(y)| \, d\mu(x) d\mu(y) \leq c_1^4 \|F\|_{L^1(X)}^2.
		\end{align*}
	\end{proof}

	\begin{prop}\label{prop : norm HS for F in L2}
		Assume, in addition to the hypotheses of Proposition \ref{prop : formula for trace and HS norm with F in L1}, that the following Bessel type inequality holds for every $f \in \H$ and some $c_2 > 0$:
		\begin{equation}\label{eq : Bessel type inequality}
			\int_X | \langle f, \varphi_x \rangle |^2 \, d\mu(x) \leq c_2^2 \| f \|^2.
		\end{equation}
	Then, for $F \in L^2(X)$, $L_F$ is a Hilbert-Schmidt operator and \eqref{eq : formula norm HS} continues to hold.
	\end{prop}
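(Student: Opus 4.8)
The plan is to reduce the $L^2$ case to the $L^1$ case already settled in Proposition \ref{prop : formula for trace and HS norm with F in L1}, via an approximation argument whose engine is a Schur-test bound on the kernel $K(x,y) := |\langle \varphi_x, \varphi_y\rangle|^2$ that appears in \eqref{eq : formula norm HS}. The crucial observation is that the new hypothesis \eqref{eq : Bessel type inequality}, applied to $f = \varphi_x$, together with \ref{coherent state map is bounded}, yields
\[
\int_X K(x,y)\, d\mu(y) = \int_X |\langle \varphi_x, \varphi_y\rangle|^2\, d\mu(y) \le c_2^2 \|\varphi_x\|^2 \le c_1^2 c_2^2
\]
for a.e.\ $x$, and, since $K$ is symmetric, the same bound holds for $\int_X K(x,y)\,d\mu(x)$. (Joint measurability of $(x,y)\mapsto K(x,y)$ follows from \ref{coherent state map is weakly measurable} by expanding $\langle\varphi_x,\varphi_y\rangle$ along an orthonormal basis of the separable space $\H$.) Hence Schur's test shows that the integral operator $T_K F(x) = \int_X K(x,y) F(y)\, d\mu(y)$ is bounded on $L^2(X)$ with $\|T_K\|_{L^2 \to L^2} \le c_1^2 c_2^2$; in particular the right-hand side of \eqref{eq : formula norm HS} is finite and bounded by $c_1^2 c_2^2 \|F\|_2^2$ for every $F \in L^2(X)$.

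First I would verify that $L_F$ is a well-defined bounded operator for $F \in L^2(X)$. For $f,g\in \H$, the Cauchy--Schwarz inequality in $L^2(X)$ combined with \ref{coherent state map is bounded} and \eqref{eq : Bessel type inequality} gives
\[
\int_X |F(x)|\,|\langle f,\varphi_x\rangle|\,|\langle \varphi_x, g\rangle|\, d\mu(x) \le \|F\|_2\, c_1 \|g\| \Big(\int_X |\langle f,\varphi_x\rangle|^2\, d\mu(x)\Big)^{1/2} \le c_1 c_2 \|F\|_2 \|f\|\,\|g\|,
\]
so the integral in \eqref{eq : definition localization operator} converges absolutely and defines a bounded sesquilinear form; thus $L_F \in \mathcal{B}(\H)$ with $\|L_F\| \le c_1 c_2 \|F\|_2$. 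This also shows that $F \mapsto L_F$ is bounded from $L^2(X)$ into $\mathcal{B}(\H)$, a fact I will use to identify the limiting operator below.

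Next comes the approximation. Using $\sigma$-finiteness, fix an increasing sequence $X_n \uparrow X$ with $\mu(X_n) < \infty$ and set $F_n := F\, \chi_{X_n \cap \{|F| \le n\}}$. Then $F_n \in L^1(X) \cap L^2(X)$, $\|F_n - F\|_2 \to 0$ by dominated convergence, and each difference $F_n - F_m$ lies in $L^1(X)$. By Proposition \ref{prop : formula for trace and HS norm with F in L1} each $L_{F_n}$ is Hilbert--Schmidt, and applying \eqref{eq : formula norm HS} to $F_n - F_m$ together with the Schur bound gives
\[
\|L_{F_n} - L_{F_m}\|_{\mathrm{HS}}^2 = \int_{X\times X} (F_n - F_m)(x)\, K(x,y)\, \overline{(F_n - F_m)(y)}\, d\mu(x)\,d\mu(y) \le c_1^2 c_2^2 \|F_n - F_m\|_2^2,
\]
so $(L_{F_n})$ is Cauchy in the Hilbert--Schmidt norm and converges to some Hilbert--Schmidt operator $G$. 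On the other hand, the bound of the previous paragraph applied to $F - F_n$ gives $\|L_{F} - L_{F_n}\| \le c_1 c_2 \|F - F_n\|_2 \to 0$ in operator norm; since Hilbert--Schmidt convergence implies operator-norm convergence, uniqueness of limits forces $G = L_F$, and therefore $L_F$ is Hilbert--Schmidt. Passing to the limit in $\|L_{F_n}\|_{\mathrm{HS}}^2 = \langle T_K F_n, F_n\rangle_{L^2}$ — the left side converging since $L_{F_n} \to L_F$ in $\mathrm{HS}$, the right side since $T_K$ is bounded and $F_n \to F$ in $L^2$ — then establishes \eqref{eq : formula norm HS} for $F$.

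The only genuinely delicate point is the Schur-test estimate, that is, extracting uniform integrability of $K$ in each variable from the Bessel hypothesis \eqref{eq : Bessel type inequality}; this is exactly where the extra assumption enters, and everything else is a soft functional-analytic limiting argument. I would also be careful, when invoking Proposition \ref{prop : formula for trace and HS norm with F in L1} on the differences $F_n - F_m$, that formula \eqref{eq : formula norm HS} there is stated for $L^1$ data, which is precisely why the truncation is arranged so that each $F_n$ (and hence each difference) stays in $L^1(X)$.
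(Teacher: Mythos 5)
Your proof is correct and follows essentially the same route as the paper's: the Schur-test bound on the kernel $|\langle\varphi_x,\varphi_y\rangle|^2$ derived from the Bessel inequality, the operator-norm bound $\|L_F\|\leq c_1c_2\|F\|_2$, and the approximation by $F_n\in L^1(X)\cap L^2(X)$ with identification of the Hilbert--Schmidt limit via uniqueness of limits in $\mathcal{L}(\H)$. The only difference is that you spell out a few details the paper leaves implicit (the explicit truncation, joint measurability of the kernel, and the passage to the limit in the norm identity), which is fine.
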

	\begin{proof}
		We notice that the map $F \mapsto L_F$ is bounded from $L^2(X)$ into the space of linear bounded operators on $\mathcal{H}$ -which we will denote by $\mathcal{L}(\H)$. Indeed,
		\begin{equation*}
			\| L_F f \| = \sup_{\|g\| \leq 1} | \langle L_F f, g \rangle | \leq c_1 c_2 \| F \|_{L^2(X)} \| f \|,
		\end{equation*}
		where we used the Cauchy-Schwarz inequality, property \eqref{eq : Bessel type inequality} and the fact that $\|\varphi_x\|\leq c_1$.\par On the other hand, thanks to \eqref{eq : Bessel type inequality} we have
		\begin{equation*}
		\esssup_{y\in X}\int_X |\langle \varphi_x, \varphi_y \rangle|^2 \, d\mu(x) = \esssup_{x\in X}\int_X |\langle \varphi_x, \varphi_y \rangle|^2 \, d\mu(y) \leq c_1^2 c_2^2. 
		\end{equation*}
		Hence, by Schur's test, we see that the right-hand side of \eqref{eq : formula norm HS} is a continuous quadratic form on $L^2(X)$. Therefore, if we take a sequence $F_n \in L^1(X) \cap L^2(X)$ that converges to $F$ in $L^2(X)$, the sequence $L_{F_n}$ is a Cauchy sequence in the space of Hilbert-Schmidt operators on $\H$, and therefore has a limit. Since $L_{F_n} \rightarrow L_F$ in $\L(\H)$ due to the continuity of the map $F \mapsto L_F$, by the uniqueness of the limit we conclude that $L_{F_n}$ converges to $L_{F}$ also in the space of Hilbert-Schmidt operators on $\H$. Hence $L_F$  is a Hilbert-Schmidt operator itself, for which \eqref{eq : formula norm HS} holds.
	\end{proof}
	\begin{remark}
		Assume, in place of \eqref{eq : Bessel type inequality}, the stronger {\it resolution of the identity} formula
		\begin{equation*}\label{eq : resolution of the identity}
			\int_X | \langle f, \varphi_x \rangle |^2 \, d\mu(x) = c \|f\|^2,
		\end{equation*}
		for some $c>0$. Then the linear map $\V : \H \rightarrow L^2(X)$ given by
		\begin{equation*}\label{eq : abstract definition coherent state transform}
			\V f(x) \coloneqq \dfrac{1}{\sqrt{c}} \langle f, \varphi_x \rangle
		\end{equation*}
		is an isometry and its range is a reproducing kernel Hilbert space, i.e.:
		\begin{equation*}\label{eq : formula reproducing kernel Hilbert space}
			\V f(x) = \dfrac{1}{\sqrt{c}} \langle \V f, \V \varphi_x \rangle_{L^2(X)} = \dfrac{1}{c} \int_X \V f(y) \langle \varphi_x, \varphi_y \rangle \, d\mu(y).
		\end{equation*}
		The above formula \eqref{eq : formula norm HS} was proved for particular reproducing kernel Hilbert spaces by several authors (see \cite{abreu} and the references therein), in particular when $F$ is the characteristic function of a subset $\Omega \subset X$ of finite measure. However, Proposition \ref{prop : formula for trace and HS norm with F in L1} shows that no reproducing property is in fact necessary in that case. Also, for Proposition \ref{prop : norm HS for F in L2} we only assumed the Bessel type inequality \eqref{eq : Bessel type inequality}.
	\end{remark}
	
	\subsection{Time-frequency localization operators}
	We are now switching our attention towards more classical time-frequency localization operators. Our measure space $X$ is now $\R^d \times \R^d$ while the Hilbert space $\H$ is now $L^2(\R^d)$. Given the $L^2$-normalized Gaussian $\varphi(x) = 2^{d/4} e^{-\pi |x|^2}$, for any $z = (x_0, \omega_0) \in \R^d \times \R^d$ we consider the following functions
	\begin{equation*}\label{eq : time-frequency shift of normalized Gaussian}
		\varphi_z (x) = e^{2 \pi i \omega_0 \cdot x} \varphi(x-x_0), \quad x \in \R^d.
	\end{equation*}
	With this particular choice, given $f \in L^2(\R^d)$ the map $ \R^{2d}\ni z \mapsto \langle f, \varphi_z \rangle$ is the usual short-time Fourier transform $\mathcal{V}f$ with Gaussian window as defined in \eqref{eq STFT}, which is a continuous and therefore measurable function. Moreover $\| \varphi_z \|_{L^2} = 1$ for every $z \in \R^{2d}$ and $\mathcal{V}:L^2(\R^d)\to L^2(\R^{2d})$ is an isometry (see e.g. \cite{grochenig}). This means that the assumptions of Proposition \ref{prop : norm HS for F in L2} are satisfied and from a direct computation one can see that
	\begin{equation*}\label{eq : kernel for HS norm formula}
		|\langle \varphi_z, \varphi_w \rangle|^2 = e^{-\pi |z-w|^2}, \quad z,w \in \R^{2d}.
	\end{equation*}
	As a consequence of Propositions \ref{prop : formula for trace and HS norm with F in L1} and \ref{prop : norm HS for F in L2} we obtain, for $F \in L^1(\R^{2d}) + L^2(\R^{2d})$,
	\begin{equation}\label{eq HSstima}
	\| L_F \|_{\mathrm{HS}}^2 = \int_{\R^{2d} \times \R^{2d}} F(z) e^{-\pi |z-w|^2} \overline{F(w)} \, dz dw.
	\end{equation}
	We observe that the function $e^{-\pi t^2}$ for $t \geq 0$ is strictly decreasing.
	\begin{prop}\label{prop : HS norm is increasing after rearrangement}
		Let $F \in L^1(\R^{2d}) + L^2(\R^{2d})$. Then
		\begin{equation*}\label{eq : inequality for HS norm after rearrangement}
			\|L_{F}\|_{\mathrm{HS}} \leq \| L_{|F|^*} \|_{\mathrm{HS}},
		\end{equation*}
		where $|F|^*$ is the symmetric decreasing rearrangement of $|F|$. Equality occurs if and only if $F(z) = e^{i \theta} \rho(|z-z_0|)$ for a.e. $z \in \R^{2d}$ for some $\theta \in \R$, $z_0 \in \R^{2d}$ and some decreasing function $\rho \colon [0,+\infty) \rightarrow [0, +\infty)$.
	\end{prop}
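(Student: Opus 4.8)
The plan is to apply Riesz' rearrangement inequality (Theorem \ref{th : Riesz rearrangement inequality}) directly to the explicit formula \eqref{eq HSstima} for the Hilbert-Schmidt norm. The starting point is the observation that, since the kernel $e^{-\pi|z-w|^2}$ is nonnegative, we have the pointwise bound
\begin{equation*}
	\Big| \int_{\R^{2d}\times\R^{2d}} F(z)\, e^{-\pi|z-w|^2}\, \overline{F(w)}\, dz\, dw \Big| \leq \int_{\R^{2d}\times\R^{2d}} |F(z)|\, e^{-\pi|z-w|^2}\, |F(w)|\, dz\, dw,
\end{equation*}
so that $\|L_F\|_{\mathrm{HS}}^2 \leq \|L_{|F|}\|_{\mathrm{HS}}^2$. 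Thus I may assume from the outset that $F\geq 0$. Then I would take $f=h=F$ and $g(z)=e^{-\pi|z|^2}$ in Theorem \ref{th : Riesz rearrangement inequality}; since $g$ is radial it equals its own symmetric decreasing rearrangement $g^*$, and the inequality \eqref{eq : Riesz rearrangement inequality} yields exactly
\begin{equation*}
	\|L_{F}\|_{\mathrm{HS}}^2 = \int F(z)\, g(z-w)\, F(w) \leq \int F^*(z)\, g(z-w)\, F^*(w) = \|L_{F^*}\|_{\mathrm{HS}}^2,
\end{equation*}
which is the claimed estimate (using $|F|^* = F^*$ when $F\geq 0$). One should first check that the integrals are finite so that the inequality is not vacuous: the Schur-test bound established in the proof of Proposition \ref{prop : norm HS for F in L2} (together with the splitting $F\in L^1+L^2$) guarantees this.

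**For the equality case** I would invoke the rigidity clause of Theorem \ref{th : Riesz rearrangement inequality}. Here the key point is that $g(z)=e^{-\pi|z|^2}$ is \emph{strictly} symmetric decreasing and that $F$ (assumed nonzero) plays the role of both $f$ and $h$, with the integrals finite. The theorem then forces $F(z)=F^*(z-z_0)$ for a.e.\ $z$ and some common translation $z_0\in\R^{2d}$. Since $F^*$ is by construction a radial decreasing function, writing $\rho$ for its radial profile gives $F(z)=\rho(|z-z_0|)$ with $\rho\colon[0,+\infty)\to[0,+\infty)$ decreasing. This settles the real, nonnegative case.

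**The main subtlety** — and the step I expect to require the most care — is promoting the equality characterization back to general complex-valued $F$, recovering the phase factor $e^{i\theta}$. Equality in $\|L_F\|_{\mathrm{HS}}=\|L_{|F|^*}\|_{\mathrm{HS}}$ requires equality in \emph{both} steps above. The second step forces $|F|(z)=\rho(|z-z_0|)$ as just shown. The first (triangle-inequality) step, however, requires that the complex number $\int F(z)\, g(z-w)\, \overline{F(w)}\, dz\, dw$ actually have modulus equal to the integral of the absolute values; because $g>0$ everywhere, this is only possible if the integrand $F(z)\overline{F(w)}$ has constant phase wherever it is nonzero. Writing $F=e^{i\psi}|F|$, this means $\psi(z)-\psi(w)$ is constant (mod $2\pi$) on the support of $F$, hence $\psi$ is a.e.\ equal to a single constant $\theta$ on $\{F\neq 0\}$. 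Combining the two conclusions gives $F(z)=e^{i\theta}\rho(|z-z_0|)$, as asserted. I would make the phase-rigidity argument precise by noting that for nonnegative weights the equality condition in $|\int G|\leq \int|G|$ is exactly that $G$ has constant argument a.e.\ on its support, and here $G(z,w)=F(z)\,e^{-\pi|z-w|^2}\,\overline{F(w)}$ with the strictly positive Gaussian ensuring the support of $G$ is $\{F\neq 0\}\times\{F\neq 0\}$ up to null sets. The converse direction is a direct substitution: if $F(z)=e^{i\theta}\rho(|z-z_0|)$ then the phase cancels in \eqref{eq HSstima} and a change of variables $z\mapsto z+z_0$ shows $L_F$ and $L_{F^*}$ have the same Hilbert-Schmidt norm.
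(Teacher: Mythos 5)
Your proposal is correct and follows essentially the same route as the paper: first the pointwise reduction $\|L_F\|_{\mathrm{HS}}^2\leq\|L_{|F|}\|_{\mathrm{HS}}^2$, then Riesz' rearrangement inequality applied to \eqref{eq HSstima} with the strictly decreasing Gaussian kernel, and the equality case read off from the rigidity clauses of both steps. Your extra care on the phase-rigidity argument and on the finiteness of the integrals only makes explicit what the paper leaves implicit.
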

	\begin{proof}
		By \eqref{eq HSstima} and Riesz' rearrangement inequality (Theorem \ref{th : Riesz rearrangement inequality}) we have 
		\begin{align*}
			\| L_F \|_{\mathrm{HS}}^2 &= \int_{\R^{2d} \times \R^{2d}} F(z) e^{-\pi |z-w|^2} \overline{F(w)} \, dz dw\\
			&\leq \int_{\R^{2d} \times \R^{2d}} |F(z)| e^{-\pi |z-w|^2} |F(w)| \, dz dw \\
			&\leq \int_{\R^{2d} \times \R^{2d}} |F|^*(z) e^{-\pi |z-w|^2} |F|^*(w) \, dz dw.
		\end{align*}
		The first inequality becomes an equality if and only if
		\begin{equation*}
			F(z) \overline{F(w)} = |F(z)||F(w)|
		\end{equation*}
		for a.e. $z,w \in \R^{2d}$, which means that $F(z) = e^{i \theta} |F(z)|$ a.e. in $\R^{2d}$ for some $\theta \in \R$.\\
		The second inequality is an equality if and only if $|F(z-z_0)|$ is symmetric decreasing for some $z_0 \in \R^{2d}$ (Theorem \ref{th : Riesz rearrangement inequality}).
	\end{proof}
	Given $\Omega \subset \R^{2d}$ of finite measure, we write $L_{\Omega}$ for $L_{\chi_{\Omega}}$. Since $\Omega$ has finite measure, we have $\chi_{\Omega} \in L^1(\R^{2d})$, hence we have the following corollary.
	\begin{cor}\label{cor : inequality for HS norm after rearrangement for characteristic functions}
		Let $\Omega \subset \R^{2d}$ be a subset of finite measure. Then
		\begin{equation*}
			\| L_{\Omega} \|_{\mathrm{HS}} \leq \| L_{\Omega^*} \|_{\mathrm{HS}},
		\end{equation*}
		where $\Omega^* \subset \R^{2d}$ is the open ball centered at 0 and measure $|\Omega^*| = |\Omega|$. Equality occurs if and only if $\Omega$ is (equivalent, up to a set of measure zero, to) a ball.
	\end{cor}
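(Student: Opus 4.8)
The plan is to specialize Proposition \ref{prop : HS norm is increasing after rearrangement} to the weight $F = \chi_\Omega$ and then read off both the inequality and the equality case. Since $\Omega$ has finite measure, $\chi_\Omega \in L^1(\R^{2d}) \subset L^1(\R^{2d}) + L^2(\R^{2d})$, so the proposition applies. The only identification needed is that the symmetric decreasing rearrangement of the (already nonnegative) function $|\chi_\Omega| = \chi_\Omega$ is precisely $\chi_{\Omega^*}$, which is immediate from the layer-cake definition of the symmetric rearrangement recalled in Section \ref{sec : notation and preliminaries}: the superlevel sets of $\chi_\Omega$ equal $\Omega$ for $0 \le t < 1$ and are empty otherwise, so $(\chi_\Omega)^* = \chi_{\Omega^*}$. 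Substituting $|F|^* = \chi_{\Omega^*}$ into the bound $\|L_F\|_{\mathrm{HS}} \le \|L_{|F|^*}\|_{\mathrm{HS}}$ of the proposition yields $\|L_\Omega\|_{\mathrm{HS}} \le \|L_{\Omega^*}\|_{\mathrm{HS}}$ at once.

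For the characterization of equality, the ``if'' direction is immediate: if $\Omega$ coincides, up to a null set, with a ball $z_0 + B_r$, then $\chi_\Omega(z) = \rho(|z - z_0|)$ a.e. with $\rho = \chi_{[0,r)}$ decreasing, and the equality clause of Proposition \ref{prop : HS norm is increasing after rearrangement} (with $\theta = 0$) gives $\|L_\Omega\|_{\mathrm{HS}} = \|L_{\Omega^*}\|_{\mathrm{HS}}$. For the converse, the proposition forces $\chi_\Omega(z) = e^{i\theta}\rho(|z - z_0|)$ a.e. for some $\theta \in \R$, some $z_0 \in \R^{2d}$ and some decreasing $\rho \colon [0,+\infty) \to [0,+\infty)$. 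Since $\chi_\Omega$ is real and nonnegative and does not vanish a.e., we may take $\theta = 0$; and since $\chi_\Omega$ takes only the values $0$ and $1$ almost everywhere, $\rho$ must take values in $\{0,1\}$ for almost every $t \ge 0$.

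The one genuinely non-formal point — which I expect to be the sole, and still very mild, obstacle — is to deduce from monotonicity that such a $\rho$ is, up to a null set, the indicator $\chi_{[0,r)}$ of an interval, with $r = \sup\{t \ge 0 \colon \rho(t) = 1\}$ (and $r = 0$ if this set is empty). Granting this, we obtain $\chi_\Omega(z) = \chi_{z_0 + B_r}(z)$ for a.e. $z$, i.e.\ $\Omega$ is equivalent, up to measure zero, to the ball $z_0 + B_r$, and the constraint $|\Omega| = |\Omega^*|$ fixes $r$. Care must only be taken with the fact that $\rho$ is determined merely up to sets of measure zero, so the argument should be phrased in terms of the essential supremum above; after that the conclusion follows with no further analysis.
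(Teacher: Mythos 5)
Your proposal is correct and follows exactly the route the paper intends: the paper gives no separate proof, simply noting that $\chi_\Omega\in L^1(\R^{2d})$ and deducing the corollary from Proposition \ref{prop : HS norm is increasing after rearrangement}, which is precisely your specialization $F=\chi_\Omega$, $|F|^*=\chi_{\Omega^*}$. Your extra care in checking that a decreasing $\{0,1\}$-valued $\rho$ is (up to a null set) the indicator of an interval is a detail the paper leaves implicit, and it is handled correctly.
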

	\begin{remark}
		The quantity $\| L_{B_r} \|_{\mathrm{HS}}$ can be ``explicitly'' computed in terms of Bessel functions:
		\begin{align*}
			\|  L_{B_r} \|_{\mathrm{HS}}^2 &= \int_{\R^{2d}} \chi_{B_r}(z) \left( \int_{\R^{2d}} e^{-\pi |z-w|^2} \chi_{B_r} (w) \, dw \right) \, dz \\
			&= \int_{\R^{2d}} \chi_{B_r}(z) (e^{-\pi |\cdot|^2} * \chi_{B_r}) (z) \, dz \\
			\overset{\mathrm{Parseval}}&{=} \int_{\R^{2d}} |\widehat{\chi_{B_r}}(w)|^2 e^{-\pi |w|^2} \, dw
		\end{align*}
		and $\widehat{\chi_{B_r}}(w)$ is given by (see \cite[page 324]{stein2011functional})
		\begin{equation*}
			\widehat{\chi_{B_r}}(w) = 2 \pi |w|^{-d+1} \int_0^r J_{d-1}(2 \pi |w| R) R^d \, dR,
		\end{equation*}
		where $J_{d-1}$ is the Bessel function of order $d-1$.
	\end{remark}
	\begin{remark}
	\ 
	\begin{itemize}\par
	
	\item[(a)]
		Formula \eqref{eq HSstima} can also be obtained by observing that $L_F$ can be written as pseudodifferential operator with Weyl symbol $a(z)$, $z \in \R^{2d}$, given by
		\begin{equation*}
			a = F * \Phi, \quad \Phi(z) = 2^d e^{-2 \pi |z|^2},\ z \in \R^{2d},
		\end{equation*}
		see e.g. \cite{wong}. Hence,
		\begin{equation*}
			\|L_F\|_{\mathrm{HS}}^2 = \|a\|_2^2  = \int_{\R^{2d}} e^{-\pi |w|^2} |\widehat{F}(w)|^2 \, dw = \int_{\R^{2d}} F(z) e^{-\pi |z-w|^2} \overline{F(w)} \, dzdw.
		\end{equation*}
\item[(b)]
		Formula \eqref{eq HSstima} can be written equivalently as
		\begin{equation*}
			\|L_F\|_{\mathrm{HS}}^2 = \int_{\R^{2d}} (F * e^{-\pi |\cdot|^2})(z) \overline{F(z)} \, dz.
		\end{equation*}
		Therefore, using the Cauchy-Schwarz inequality, Young's inequality and the fact that $\int_{\R^{2d}} e^{-\pi |z|^2} \, dz = 1$ it follows that
		\begin{equation*}
			\|L_F\|_{\mathrm{HS}} \leq \|F\|_2,
		\end{equation*}
		which is a well known result (see e.g. \cite{wong}). However, this strategy also shows that equality can never occur if $F \neq 0$, because equality would imply $\hat{F}(w) = c \hat{F}(w)e^{-\pi |w|^2}$ for some $c \geq 0$. 
		
		Also, observe that
		\begin{equation*}
			\sup_{F \in L^2(\R^{2d}) \setminus \{0\}} \dfrac{\|L_F\|_{\mathrm{HS}}}{\|F\|_2} = 1,
		\end{equation*}
		as one sees by taking $F = \chi_{B_r}$ and letting $r \to +\infty$ (we leave the easy computation to the interested reader).
\item[(c)]
	Consider the so-called Schatten-von Neumann class $\mathcal{S}_p$, $1 \leq p < \infty$, constituted of the compact operators $S$ on $L^2(\R^d)$ whose sequence of singular values $\sigma_j$, $j=1,2,\ldots$, belongs to $\ell^p$, equipped with the norm $\|S\|_{\mathcal{S}_p} \coloneqq \left(\sum_{j=1}^{\infty} \sigma_j^p \right)^{1/p}$. In particular, for $p=2$ we have the class of Hilbert-Schmidt operators, with equal norm. For $p=\infty$ we set $\mathcal{S}_{\infty} = \L(L^2(\R^{d}))$.
		
		It is well known (see e.g. \cite{wong}) that
		\begin{equation}\label{eq : boundedness in the Schatten-von Neumann class}
			\|L_F\|_{\mathcal{S}_p} \leq \|F\|_p
		\end{equation}
		for all $1 \leq p \leq \infty$. In fact, for $1 \leq p \leq \infty$ it holds
		\begin{equation*}
			\sup_{F \in L^p(\R^{2d}) \setminus \{0\}} \dfrac{\|L_F\|_{\mathcal{S}_p}}{\|F\|_p} = 1.
			\end{equation*}
		Indeed, if the above supremum were strictly less than 1 for some $p_0 \in [1,2)$, interpolating with the estimate \eqref{eq : boundedness in the Schatten-von Neumann class} with $p=\infty$ would give
		\begin{equation*}
			\|L_F\|_{\mathcal{S}_2} = \|L_F\|_{\mathrm{HS}} \leq C \|F\|_2,
		\end{equation*}
		for some $C<1$, thus contradicting the previous remark. On the other hand, if the supremum were strictly less than 1 for some $p_0 \in (2,+\infty]$ one argues similarly by interpolating with \eqref{eq : boundedness in the Schatten-von Neumann class} with $p=1$.
\end{itemize}
\end{remark}
	\section{Quantitative estimate}
	In the previous section we proved an estimate for the Hilbert-Schmidt norm of time-frequency localization operators. In particular, for operators of the type $L_{\Omega}$ we proved that $\|L_\Omega\|_{\rm HS}$ is maximized, among all subsets $\Omega$ of a given finite measure, when  $\Omega$ is a ball and the balls are the only maximizers (see Corollary \ref{cor : inequality for HS norm after rearrangement for characteristic functions}). In this section we focus our attention on a quantitative version of Corollary \ref{cor : inequality for HS norm after rearrangement for characteristic functions}. Roughly speaking, we want to prove that the difference $\| L_{\Omega^*} \|_{\mathrm{HS}}^2 - \| L_{\Omega} \|_{\mathrm{HS}}^2$ is bounded from below by some function of the set $\Omega$ which measures how much $\Omega$ differs from a ball, which implies that if the above ``deficit" is small then $\Omega$ is ``almost'' a ball. The notion of $\Omega$ being close to a ball is made precise thanks to the {\it Fraenkel asymmetry index} $\alpha[\Omega]$ as defined in \eqref{eq fraenkel}. 
	
	From \eqref{eq HSstima} we have
	\begin{equation}\label{eq : formula HS norm localization operators with characteristic function}
		\|L_{\Omega} \|_{\mathrm{HS}}^2 = \int_{\Omega \times \Omega} e^{- \pi |z-w|^2} \, dz\,dw,
	\end{equation}
	and in the previous section we used Riesz' rearrangement inequality to prove that the right-hand side increases if $\Omega$ is replaced by $\Omega^*$. To obtain a lower bound for $\| L_{\Omega^*} \|_{\mathrm{HS}}^2 - \| L_{\Omega} \|_{\mathrm{HS}}^2$ we will use a quantitative version of Riesz' rearrangement inequality, that was proved by Christ \cite{christ2017sharpened}; see also Frank and Lieb \cite[Theorem 1]{frank2019note} for a generalization to density functions. 
		\begin{teo}\label{th : Christ theorem}
		Let $\delta \in (0, 1/2)$. Then, there exists a constant $c_{d,\delta}$ such that for all balls $B \subset \R^d$ centered at the origin, all $\Omega \subset \R^d$ such that
		\begin{equation*}\label{eq : condition on the measure of Omega}
			\delta < \dfrac{|B|^{1/d}}{2 |\Omega|^{1/d}} < 1 - \delta
		\end{equation*}
		one has
		\begin{equation}\label{eq : quantitative Riesz inequality}
			\int_{\Omega \times \Omega} \chi_B(x-y) \, dx dy \leq \int_{\Omega^* \times \Omega^*} \chi_B(x-y) \, dx dy - c_{d, \delta} |\Omega|^2 \alpha[\Omega]^2.
		\end{equation}
	\end{teo}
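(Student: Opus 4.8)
The plan is to prove the inequality by the now-standard two-step strategy for quantitative rearrangement inequalities: a compactness argument reducing matters to sets $\Omega$ that are already close to a ball, followed by a quantitative second-order (``coercivity'') analysis in that near-ball regime. Throughout I would exploit the two invariances of the problem. The deficit
\begin{equation*}
	D(\Omega) := \int_{\Omega^*\times\Omega^*}\chi_B(x-y)\,dx\,dy - \int_{\Omega\times\Omega}\chi_B(x-y)\,dx\,dy
\end{equation*}
and the asymmetry $\alpha[\Omega]$ are invariant under translations of $\Omega$ and scale homogeneously under dilations (both sides of \eqref{eq : quantitative Riesz inequality} scale like $|\Omega|^2$ once the ratio $|B|^{1/d}/|\Omega|^{1/d}$ is held fixed), so I may normalize $|\Omega|$ so that $\Omega^* = B_1$ and regard $B = B_r$ with $r \in (2\delta,\,2(1-\delta))$ fixed in the allowed range. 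It is convenient to rewrite, via the substitution $t = x-y$,
\begin{equation*}
	\int_{\Omega\times\Omega}\chi_B(x-y)\,dx\,dy = \int_B |\Omega\cap(\Omega-t)|\,dt,
\end{equation*}
so that $D(\Omega) = \int_B \bigl(A_{B_1}(t) - A_\Omega(t)\bigr)\,dt$ with the autocorrelation $A_\Omega(t) := |\Omega\cap(\Omega - t)|$; the nonnegativity of $D$ is exactly Riesz' inequality (Theorem \ref{th : Riesz rearrangement inequality}).

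The reduction to the near-ball regime I would carry out by contradiction and compactness. If the estimate failed for some fixed $\delta$, there would be a sequence $\Omega_n$, normalized as above, with $D(\Omega_n)/\alpha[\Omega_n]^2 \to 0$. Were $\alpha[\Omega_n]$ to stay bounded below by some $\varepsilon_0 > 0$ along a subsequence, I would reach a contradiction from the \emph{strict} form of Riesz' inequality together with a concentration--compactness selection: after recentering (using translation invariance, so that the ball achieving the infimum in $\alpha[\Omega_n]$ may be taken to be $B_1$), a suitable subsequence of $\chi_{\Omega_n}$ converges, and the limit would be an extremizer, hence a ball, forcing $\alpha \to 0$ and contradicting $\alpha[\Omega_n]\ge\varepsilon_0$. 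In fact this produces a uniform bound $D(\Omega)\ge c(\varepsilon_0)>0$ on the region $\{\alpha[\Omega]\ge\varepsilon_0\}$, which dominates the bounded quantity $\alpha[\Omega]^2$ and is stronger than needed. Thus only the regime $\alpha[\Omega_n]\to 0$ survives, and it suffices to prove \eqref{eq : quantitative Riesz inequality} under the additional assumption that $\Omega$ is nearly spherical.

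In that regime I would parametrize $\Omega$, after recentering at the optimal Fraenkel ball, as a nearly spherical set $\Omega = \{x : 0\le |x| < 1 + u(x/|x|)\}$ with $u$ small on $\mathbb{S}^{d-1}$, the volume constraint forcing the average of $u$ to be $O(\|u\|_{L^2}^2)$, and $\alpha[\Omega] \lesssim \|u\|_{L^1(\mathbb{S}^{d-1})} \lesssim \|u\|_{L^2(\mathbb{S}^{d-1})}$. Expanding $A_\Omega - A_{B_1}$ to second order in $u$ and integrating against $\chi_{B_r}$ yields
\begin{equation*}
	D(\Omega) = Q_r(u) + o(\|u\|_{L^2}^2),
\end{equation*}
where $Q_r$ is an explicit translation-invariant quadratic form, diagonalized by spherical harmonics with eigenvalues computable from the autocorrelation of the ball (expressible through Gegenbauer/Bessel data). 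The decisive step is a coercivity estimate $Q_r(u) \ge \kappa_{d,\delta}\,\|u_\sharp\|_{L^2(\mathbb{S}^{d-1})}^2$, where $u_\sharp$ is the projection of $u$ off the span of the constant mode (removed by the volume constraint) and the degree-one spherical harmonics (the translation zero modes, removed precisely by optimizing the center in $\alpha[\Omega]$). Since translations leave $\alpha$ unchanged and $\alpha[\Omega]^2 \lesssim \|u_\sharp\|_{L^2}^2$, this gives $D(\Omega)\gtrsim \alpha[\Omega]^2$ with the sharp exponent $2$.

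The main obstacle I anticipate is exactly this spectral gap: one must show that every eigenvalue of $Q_r$ is strictly positive apart from the unavoidable degree-zero and degree-one modes, and that the gap is uniform, degenerating (as the notation $c_{d,\delta}$ suggests) only as $\delta\to 0$. This is where the hypothesis $\delta < |B|^{1/d}/(2|\Omega|^{1/d}) < 1-\delta$ enters: if $B$ is too small, or its radius approaches the diameter of $\Omega^*$, then $\chi_B$ fails to ``detect'' boundary oscillations of the relevant frequency and the associated eigenvalues collapse. A secondary difficulty is that $\chi_B$ is merely bounded, so the expansion of the autocorrelation produces boundary-layer contributions near $\partial B_r$ that must be controlled; and passing from the $L^2$ coercivity of $Q_r$ to the genuinely $L^1$-type Fraenkel asymmetry, uniformly as $\|u\|\to 0$, relies on the favorable Sobolev comparison above together with ensuring that the $o(\|u\|_{L^2}^2)$ remainder is negligible against $\|u_\sharp\|_{L^2}^2$ --- which is precisely why the preliminary compactness reduction to $\alpha\to 0$ is indispensable.
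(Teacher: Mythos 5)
You should first be aware that the paper contains no proof of this statement: Theorem \ref{th : Christ theorem} is quoted as a known result of Christ \cite{christ2017sharpened} (with Frank and Lieb \cite{frank2019note} cited for the extension to densities), so there is no internal argument to compare yours against, and your proposal must stand on its own as a proof of Christ's theorem. Your outline does track the strategy actually used in that literature --- a compactness reduction to the nearly spherical regime followed by a second-order coercivity analysis --- but as written it is a roadmap in which the two decisive steps are asserted rather than established.

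In the compactness step, a sequence $\chi_{\Omega_n}$ with $|\Omega_n|$ normalized has, after recentering, only a weak-$*$ limit $f$ with $0\le f\le 1$; this limit need be neither a characteristic function nor of full mass (oscillation, dichotomy and vanishing are all possible). To conclude that the limit is a ball you need the Riesz inequality, together with its equality cases, for \emph{densities} constrained by $0\le f\le 1$ (a bathtub-principle version --- this is exactly what \cite{frank2019note} supplies), an argument excluding escape of mass to infinity, and strong $L^1$ convergence of $\chi_{\Omega_n}$ in order to transfer ``the limit is a ball'' back to $\alpha[\Omega_n]\to 0$, since the Fraenkel asymmetry is not continuous under weak-$*$ convergence. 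None of this is automatic from ``a suitable subsequence converges.'' In the perturbative step, the two claims $D(\Omega)=Q_r(u)+o(\|u\|_{L^2}^2)$ and $Q_r(u)\ge\kappa_{d,\delta}\|u_\sharp\|_{L^2}^2$ are precisely where the theorem lives: because the interaction kernel is $\chi_{B}$ rather than a smooth function, the second variation involves boundary-layer terms that are not controlled by a formal Taylor expansion, and the uniform strict positivity of all eigenvalues of $Q_r$ beyond the degree-$0$ and degree-$1$ modes, for $|B|^{1/d}/(2|\Omega|^{1/d})$ ranging over $(\delta,1-\delta)$, is a nontrivial spherical-harmonic/Bessel computation that you identify as ``the main obstacle'' but do not perform. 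You correctly diagnose why both endpoints of the admissibility condition are needed, which is a good sign, but until those two lemmas are actually proved the proposal is a plausible table of contents rather than a proof.
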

	We have therefore the following result.
	\begin{prop}\label{prop : quantitative estimate for HS norm}
		For every subset $\Omega \subset \R^{2d}$ of positive finite measure it holds
		\begin{equation}\label{eq : quantitative estimate for HS norm}
			\|L_{\Omega}\|_{\mathrm{HS}}^2 \leq \| L_{\Omega^*} \|_{\mathrm{HS}}^2 - c_1 \beta(|\Omega|) \alpha[\Omega]^2,
		\end{equation}
		where
		\begin{equation*}\label{eq : expression of beta}
			\beta(t) = \left\{
			\begin{aligned}
				t^{2+\frac{1}{d}},\quad	     & \text{for } 0 < t \leq 1\\
				t^2 e^{-c_2 t^{1/d}}, \quad  & \text{for } t > 1
			\end{aligned}\right.
		\end{equation*}
		for some constants $c_1, c_2 > 0$ depending only on $d$.
	\end{prop}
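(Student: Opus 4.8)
The plan is to deduce the estimate from Christ's quantitative Riesz inequality (Theorem~\ref{th : Christ theorem}), applied in dimension $2d$, by slicing the Gaussian kernel through a layer-cake decomposition. Since $t\mapsto e^{-\pi t^2}$ is strictly decreasing, for $s\in(0,1)$ the superlevel set $\{z\in\R^{2d}\colon e^{-\pi|z|^2}>s\}$ is exactly the ball $B_{r(s)}$ centered at the origin of radius $r(s)=\sqrt{-\log s/\pi}$, so that
\[
e^{-\pi|z-w|^2}=\int_0^1\chi_{B_{r(s)}}(z-w)\,ds.
\]
Inserting this into formula~\eqref{eq : formula HS norm localization operators with characteristic function} and applying Fubini (all integrands being nonnegative), both for $\Omega$ and for $\Omega^*$, I would write
\[
\|L_{\Omega^*}\|_{\mathrm{HS}}^2-\|L_{\Omega}\|_{\mathrm{HS}}^2=\int_0^1\Big(\int_{\Omega^*\times\Omega^*}\chi_{B_{r(s)}}(z-w)\,dz\,dw-\int_{\Omega\times\Omega}\chi_{B_{r(s)}}(z-w)\,dz\,dw\Big)\,ds.
\]
By the Riesz rearrangement inequality the $s$-integrand is nonnegative for every $s$, so I am free to bound the integral from below by restricting to any convenient subset of $(0,1)$.

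Next I would fix $\delta=1/4$ and apply Theorem~\ref{th : Christ theorem} in $\R^{2d}$ to each ball $B=B_{r(s)}$ satisfying the admissibility condition. Writing $R$ for the radius of $\Omega^*$, so that $|\Omega|=|B_R|$, the ratio $|B_{r(s)}|^{1/(2d)}/(2|\Omega|^{1/(2d)})$ equals $r(s)/(2R)$, and the condition $\delta<r(s)/(2R)<1-\delta$ becomes $r(s)\in(R/2,3R/2)$. On this range each $s$-integrand is at least $c_{2d,1/4}\,|\Omega|^2\alpha[\Omega]^2$, whence
\[
\|L_{\Omega^*}\|_{\mathrm{HS}}^2-\|L_{\Omega}\|_{\mathrm{HS}}^2\ge c_{2d,1/4}\,|\Omega|^2\,\alpha[\Omega]^2\,|I|,\qquad I:=\{s\in(0,1)\colon r(s)\in(R/2,3R/2)\}.
\]
Since $s\mapsto r(s)$ is decreasing with inverse $r\mapsto e^{-\pi r^2}$, the interval $I$ has measure $|I|=e^{-\pi R^2/4}-e^{-9\pi R^2/4}$, and the identity $R^2=(|\Omega|/\omega_{2d})^{1/d}$, with $\omega_{2d}$ the measure of the unit ball of $\R^{2d}$, turns this into an explicit function of $t:=|\Omega|$.

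It then remains to check that $t^2|I|$ reproduces the claimed $\beta(t)$. Setting $a:=\omega_{2d}^{-1/d}$, so that $R^2=a\,t^{1/d}$, one has $|I|=e^{-\pi a t^{1/d}/4}\big(1-e^{-2\pi a t^{1/d}}\big)$. For $t>1$ the factor $1-e^{-2\pi a t^{1/d}}$ is bounded below by a positive constant, so $t^2|I|\gtrsim t^2 e^{-(\pi a/4)t^{1/d}}$, matching $t^2e^{-c_2 t^{1/d}}$ with $c_2=\pi a/4$. For $0<t\le 1$ the expansion $1-e^{-2\pi a t^{1/d}}\gtrsim t^{1/d}$ gives $|I|\gtrsim t^{1/d}$ and hence $t^2|I|\gtrsim t^{2+1/d}$. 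A routine one-variable argument shows each bound holds with a uniform constant on its range, and absorbing all constants into $c_1$ yields \eqref{eq : quantitative estimate for HS norm} with $\beta$ as stated.

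The only delicate point — the main obstacle, such as it is — is that Christ's constant $c_{2d,\delta}$ degenerates as $\delta\to 0$; this forces the fixed choice $\delta=1/4$, which in turn makes the admissible window for $r(s)$, and thus $|I|$, shrink exponentially as $|\Omega|\to\infty$, producing the (conjecturally suboptimal, cf.\ Conjecture~\ref{conj 1}) exponential factor in $\beta$ for large $t$. Beyond this, one need only verify that the two asymptotic regimes glue into a single clean lower bound of the required form, which is elementary once the exact expression for $|I|$ is in hand.
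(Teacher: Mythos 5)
Your proposal is correct and follows essentially the same route as the paper: a layer-cake decomposition of the Gaussian kernel into balls (your parametrization $s=e^{-\pi R^2}$ is just a change of variables in the paper's identity $e^{-\pi t^2}=\int_0^{+\infty}\chi_{(0,R)}(t)\,2\pi R e^{-\pi R^2}\,dR$), followed by Christ's theorem with $\delta=1/4$ on the admissible window of radii and the same asymptotic matching of $t^2|I|$ with $\beta(t)$ in the two regimes. The constants you obtain agree with the paper's $c'_1=\pi/(4|B_1|^{1/d})$ and $c'_2=9\pi/(4|B_1|^{1/d})$, so there is nothing to fix.
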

	\begin{proof}
		We follow the same strategy as in Frank and Lieb \cite[Theorem 4]{frank2019proof}.
		
		Using the formula \eqref{eq : formula HS norm localization operators with characteristic function} and the fact that $$e^{-\pi t^2} = \int_0^{+\infty} \chi_{(0,R)}(t) 2 \pi R e^{- \pi R^2} \, dR$$ we obtain:
		\begin{align*}
			&\| L_{\Omega^*} \|_{\mathrm{HS}}^2 - \| L_{\Omega} \|_{\mathrm{HS}}^2 \\
			&= \int_{0}^{+\infty} \left( \int_{\Omega^* \times \Omega^*} \chi_{B_R}(z-w) \, dzdw - \int_{\Omega \times \Omega} \chi_{B_R}(z-w) \, dzdw \right) 2\pi R e^{-\pi R^2}\,dR.
		\end{align*}
		Letting
		\begin{equation*}\label{eq : interval for the estimate in the proof}
			I \coloneqq \left\{R>0 \colon \dfrac14 < \dfrac{|B_R|^{1/2d}}{2|\Omega|^{1/2d}} < \dfrac34\right\}
		\end{equation*}
		we can use Christ's result (Theorem \ref{th : Christ theorem}) with $\delta=1/4$, thus obtaining
		\begin{align*}
			\| L_{\Omega^*} \|_{\mathrm{HS}}^2 - \| L_{\Omega} \|_{\mathrm{HS}}^2 &\geq c |\Omega|^2 \alpha[\Omega]^2 \int_I 2 \pi R e^{-\pi R^2} \, dR \\
			&= c |\Omega|^2 \alpha[\Omega]^2 \left( e^{-c'_1 |\Omega|^{1/d}} - e^{-c'_2 |\Omega|^{1/d}} \right),
		\end{align*}
		where $c$ is the constant from Theorem \ref{th : Christ theorem}, and $c'_1 = \frac{\pi}{4|B_1|^{1/d}}$ and $c'_2 =  \frac{9\pi}{4|B_1|^{1/d}}$ are constants that depend only on $d$. 
		
		To highlight the behavior of the latter expression as $|\Omega| \to 0^+$ or $|\Omega| \to + \infty$, in the statement we introduced the function $\beta(t)$ which satisfies $C^{-1}\beta(t)\leq e^{-c'_1 t^{1/d}} - e^{-c'_2 t^{1/d}}\leq C\beta(t)$ for some constant $C>0$ depending in $d$. 
	\end{proof}
	
	\section{Some remarks on the sharpness of Proposition \ref{prop : quantitative estimate for HS norm}}\label{sec : some remarks}
	\subsection{Sharpness of the power $\alpha[\Omega]^2$}
	In this section we prove that the power $\alpha[\Omega]^2$ appearing in \eqref{eq : quantitative estimate for HS norm} is optimal, in the sense that we cannot take any exponent less than 2.
	
	 To this end, for $0 < \varepsilon < 1$ let
	\begin{equation*}
		\Omega_{\varepsilon} \coloneqq \{ z \in \R^{2d} \colon |z| \leq 1-\varepsilon \text{ or } 1 \leq |z| \leq 1+\delta  \},
	\end{equation*}
	where $\delta > 0$ is chosen so that $|\Omega_{\varepsilon}| = |B_1|$. This implies that $\delta = \delta(\varepsilon)$ depends on $\varepsilon$ and from the implicit function theorem we see that
	\begin{equation*}
		\delta(\varepsilon) = \varepsilon + O(\varepsilon^2) \quad \text{as } \varepsilon \rightarrow 0^+.
	\end{equation*}
	Using the formula \eqref{eq : formula HS norm localization operators with characteristic function} and the fact that $\chi_{\Omega_{\varepsilon}} = \chi_{B_1} + \chi_{\Omega_{\varepsilon}} - \chi_{B_1}$ we obtain:
	\begin{align*}
		\| L_{B_1} \|_{\mathrm{HS}}^2 - \| L_{\Omega_{\varepsilon}} \|_{\mathrm{HS}}^2 &= 2 \int_{\R^{2d} \times \R^{2d}} (\chi_{B_1} - \chi_{\Omega_{\varepsilon}})(z) e^{- \pi |z-w|^2} \chi_{B_1}(w) \, dzdw \\
		&- \int_{\R^{2d} \times \R^{2d}} (\chi_{\Omega_{\varepsilon}} - \chi_{B_1})(z) e^{- \pi |z-w|^2} (\chi_{\Omega_{\varepsilon}} - \chi_{B_1})(w) \, dzdw.
	\end{align*}
	The second integral can be easily estimated as follows:
	\begin{equation*}
		\left\lvert \int_{\R^{2d} \times \R^{2d}} (\chi_{\Omega_{\varepsilon}} - \chi_{B_1})(z) e^{- \pi |z-w|^2} (\chi_{\Omega_{\varepsilon}} - \chi_{B_1})(w) \, dzdw \right\rvert \leq |\Omega_{\varepsilon} \triangle B_1|^2 = O(\varepsilon^2),
	\end{equation*}
	while the first integral can be written in the following way:
	\begin{align*}
		\int_{\R^{2d} \times \R^{2d}} (\chi_{B_1} - \chi_{\Omega_{\varepsilon}})(z) e^{- \pi |z-w|^2}  &\chi_{B_1}(w) \, dzdw \\
		&= \int_{\R^{2d}} \left(\int_{B_1} e^{-\pi |z-w|^2} \, dw\right) (\chi_{B_1} - \chi_{\Omega_{\varepsilon}})(z) \, dz.
	\end{align*}
	We notice that the inner function is radial, so letting 
	$$ f(|z|) := \int_{B_1} e^{-\pi |z-w|^2} \, dw$$ and using polar coordinates we see that this integral can be written (up to a multiplicative constant that depends only on $d$) as
	\begin{equation*}
		\int_{1-\varepsilon}^1 f(r)r^{2d-1} \, dr - \int_1^{1+\delta} f(r) r^{2d-1} \, dr.
	\end{equation*}
	Since $f$ is smooth, we have that this difference is equal to
	\begin{equation*}
		\varepsilon f(1) + O(\varepsilon^2) - \delta f(1) + O(\delta^2) \overset{\delta = \varepsilon + O(\varepsilon^2)}{=} O(\varepsilon^2).
	\end{equation*}
	Hence, by \eqref{eq : quantitative estimate for HS norm} we see that, for some $C>0$,
	\begin{equation*}
		C \varepsilon^2\geq \| L_{B_1} \|_{\mathrm{HS}}^2 - \| L_{\Omega_{\varepsilon}} \|_{\mathrm{HS}}^2 \geq c_1 \alpha[\Omega_{\varepsilon}]^2.
	\end{equation*}
	Now, it is easy to see that $\alpha[\Omega_\varepsilon] \geq c_d \varepsilon$ (where $c_d$ is a constant that depends only $d$) and therefore the exponent of $\alpha[\Omega]$ in \eqref{eq : quantitative estimate for HS norm} cannot be replaced by any number smaller than 2.
	
	\subsection{Sharpness of the power $t^{2+1/d}$ for $0<t<1$}
	Having proved that the exponent of $\alpha[\Omega]$ in \eqref{eq : quantitative estimate for HS norm} is optimal, we can prove that the behavior of the function $\beta(t)$ in \eqref{prop : quantitative estimate for HS norm} is optimal for $0 < t < 1$, i.e. the power $t^{2+1/d}$ is sharp.
	
	To this end, we consider any subset $\Omega\subset\R^{2d}$ of positive finite measure, which is {\it not} (equivalent, up to set of measure zero, to) a ball. Then, if we consider the dilation of $\Omega$ by a factor $r>0$, namely $\Omega_r = \{z \in \R^{2d} \colon \frac{z}{r} \in \Omega\}$, we have that  the asymmetry index $\alpha[\Omega_r] $ is not zero and independent of $r$. 
	
	By \eqref{eq : formula HS norm localization operators with characteristic function}, letting $g(|z|) = 1- e^{-\pi |z|^2}$ and using the fact that $|\Omega_r^*| = |\Omega_r|$ we have:
	\begin{align*}
		\| L_{\Omega_r^*}\|_{\mathrm{HS}}^2 - \| L_{\Omega_r} \|_{\mathrm{HS}}^2 &= \int_{\Omega_r^* \times \Omega_r^*} e^{-\pi |z-w|^2} \, dzdw - \int_{\Omega_r \times \Omega_r} e^{-\pi |z-w|^2} \, dzdw \\
		&= \int_{\R^{2d} \times \R^{2d}} g(|z-w|) \left[ \chi_{\Omega_r}(z) \chi_{\Omega_r}(w) -\chi_{\Omega_r^*}(z) \chi_{\Omega_r^*}(w) \right] \, dzdw \\ 
		&\leq \int_{\R^{2d} \times \R^{2d}} g(|z-w|) \chi_{\Omega_r}(z) \chi_{\Omega_r}(w) \, dzdw. \\
		&\leq c r^2 |\Omega_r|^2 = c' |\Omega_r|^{2+1/d},																
	\end{align*}
	where we used the fact that $0 \leq g(r) \leq c r^2$. 
	
	Hence, if \eqref{eq : quantitative estimate for HS norm} holds true then
	\begin{equation*}
		c \beta(|\Omega_r|) \alpha[\Omega_r]^2 \leq \| L_{\Omega_r^*} \|_{\mathrm{HS}}^2 - \| L_{\Omega_r} \|_{\mathrm{HS}}^2 \leq c' |\Omega_r|^{2+1/d},
	\end{equation*}
	which is possible if and only if $\beta(t) \leq c'' t^{2+1/d}$ for $t$ small, since, as already observed, $\alpha[\Omega_r]>0$ is independent of $r$.
	
	\subsection{The behavior of $\beta(t)$ as $t \rightarrow +\infty$}
	In this section we are concerned with the behavior of the function $\beta(t)$ in Proposition \ref{prop : quantitative estimate for HS norm} as $t \rightarrow +\infty$, which is probably non-sharp. 
	
	Precisely, we claim that if the the inequality
	\begin{equation}\label{eq : quantitative estimate for HS norm bis}
		\|L_{\Omega}\|_{\mathrm{HS}}^2 \leq \| L_{\Omega^*} \|_{\mathrm{HS}}^2 - c_1 \beta(|\Omega|) \alpha[\Omega]^2
	\end{equation}
	holds for some constant $c>0$ and some function $\beta(t)$, and every subset $\Omega\subset\mathbb{R}^{2d}$ of finite large measure, then 
	$$\beta(t) \leq ct^{2-1/2d}\qquad {\rm as}\ t \rightarrow +\infty.$$
	
	Indeed, fixed any $v \in \R^{2d}$ with $|v|=1$ and let $\Omega_r = \Omega_r^1 \cup \Omega_r^2$, where
	\begin{align*}
		\Omega_r^1 &= \left\{z \in \R^{2d} \colon \frac{r}{3} \leq |z| \leq r \right\},\\
		\Omega_r^2 &= \left\{z \in \R^{2d} \colon |z - 2rv| < \frac{r}{3} \right\}
	\end{align*}
	(see Figure \ref{im : set Omega}).
	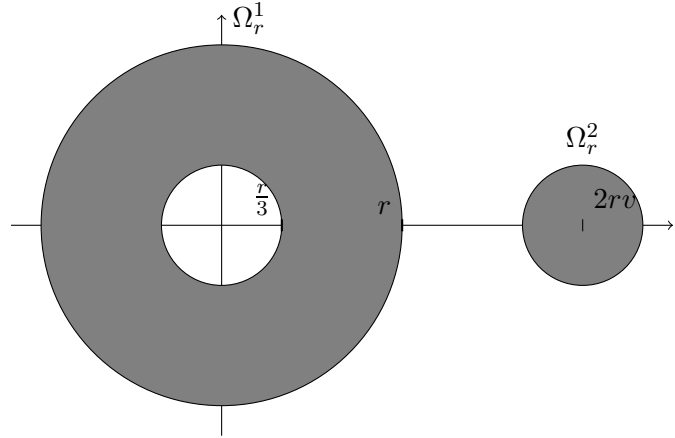
\begin{figure}[H]
		\centering
		\begin{tikzpicture}[scale=0.4]
			\draw[->] (-7, 0) -- (15, 0);
			\draw[->] (0, -7) -- (0, 7);
			\draw [fill=gray, opacity=0.50, even odd rule] (0,0) circle (2) circle (6);
			\draw [fill=gray, opacity=0.50] (12,0) + (0,0) circle(2);
			\draw (12,-0.2) -- (12, 0.2) node[above right] {\small $2rv$};
			\draw (12,2) node[above] {\small $\Omega_r^2$};
			\draw (0, 6) node[above right] {\small $\Omega_r^1$};
			\draw [thick] (2,-0.2) -- (2, 0.2);
			\draw (2, 0) node[above left] {\small $\frac{r}{3}$};
			\draw [thick] (6, -0.2) -- (6, 0.2);
			\draw (6, 0) node[above left] {\small $r$};
			
		\end{tikzpicture}
		\caption{Representation of $\Omega_r$ with $d=1$ and $v = e_1$.}
		\label{im : set Omega}
	\end{figure}
	For the sake of brevity, we let
	\begin{equation*}
		I(f,g) := \int_{\R^{2d} \times \R^{2d}} f(z) e^{-\pi |z-w|^2} \overline{g(w)} \, dzdw.
	\end{equation*}
	Using \eqref{eq : formula HS norm localization operators with characteristic function} we have:
	\begin{equation*}
		\| L_{\Omega_r^*} \|_{\mathrm{HS}}^2 - \| L_{\Omega_r} \|_{\mathrm{HS}}^2 = I(\chi_{\Omega_r^*}, \chi_{\Omega_r^*}) - I(\chi_{\Omega_r}, \chi_{\Omega_r}).
	\end{equation*}
	We notice that $\Omega_r^* = B_r$ so, using the fact that $\chi_{B_r} = \chi_{B_{r/3}} + \chi_{\Omega_r^1}$, $\chi_{\Omega_r} = \chi_{\Omega_r^1} + \chi_{\Omega_r^2}$ and that $I(\chi_{\Omega_r^2}, \chi_{\Omega_r^2}) = I(\chi_{B_{r/3}}, \chi_{B_{r/3}})$ we obtain
	\begin{equation*}
		\| L_{\Omega_r^*} \|_{\mathrm{HS}}^2 - \| L_{\Omega_r} \|_{\mathrm{HS}}^2 = 2 I(\chi_{\Omega_r^1}, \chi_{B_{r/3}}) - 2I(\chi_{\Omega_r^1}, \chi_{\Omega_r^2}) \leq 2 I(\chi_{\Omega_r^1}, \chi_{B_{r/3}}).
	\end{equation*}
	The function in the right-hand side can be written in the following way:
	\begin{equation*}
		I(\chi_{\Omega_r^1}, \chi_{B_{r/3}}) = \int_{\R^{2d}} e^{-\pi |z|^2} \left( \chi_{\Omega_r^1} * \chi_{B_{r/3}}\right)(z) \, dz.
	\end{equation*}
	The function $\chi_{\Omega_r^1} * \chi_{B_{r/3}}$ is radial and it holds
	\begin{equation*}
		(\chi_{\Omega_r^1} * \chi_{B_{r/3}})(z) \leq c r^{2d-1} \min\left\{|z|, \frac{2r}{3}\right\},
	\end{equation*}
	see Figure \ref{im : bound convolution} for a graphical intuition.
	\begin{figure}[H]
		\centering
		\begin{tikzpicture}[scale=0.4]
			\draw[->] (-7, 0) -- (7, 0);
			\draw[->] (0, -7) -- (0, 7);
			\draw [fill=gray, opacity=0.50, even odd rule, dashed] (0,0) circle (2) (1,0) circle (2);
			\draw [fill=white, dashed] (0,0) circle (2);
			\draw (0,2) node[above left] {\small $B_{r/3}$};
			\draw (1,0) circle (2);
			\draw (2,2) node[above] {\small $z + B_{r/3}$};
			\draw [fill=black] (0,0) circle (1pt);
			\draw [fill=black] (1,0) circle (1pt);
			\draw [fill=gray, opacity=0.25, dashed, even odd rule] (0,0) circle (2) (0,0) circle (6);
			\draw (0,0) circle (6);
			\draw (0,6) node[above right] {$\Omega_r^1$};
			\draw[->] (0,0) -- (1,0);
			\draw (0.5, 0) node[above] {$z$};		
		\end{tikzpicture}
		\caption{The area of the darker gray region is $(\chi_{\Omega_r^1} * \chi_{B_{r/3}})(z)$}
		\label{im : bound convolution}
	\end{figure}
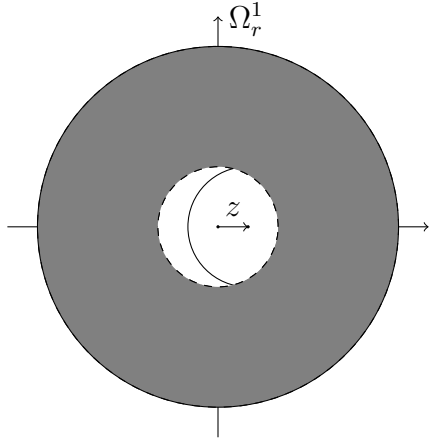
	This implies that
	\begin{equation*}
		I(\chi_{\Omega_r^1}, \chi_{B_{r/3}}) \leq cr^{2d-1} \int_{\R^{2d}} |z| e^{-\pi |z|^2} \, dz \leq c' r^{2d-1} \leq c'' |\Omega_r|^{1-1/2d}.
	\end{equation*}
	Since the Fraenkel index $\alpha[\Omega_r]$ is not zero and independent of $r$ the above claim is proved. 
	
	One can see that the above example also ``saturates" the analogous quantitative bound in $\R^d$ for the interaction kernel $|x|^{-\lambda}$, with $0<\lambda<d$ \cite[Theorem 4]{frank2019proof}. These facts and some further experimentation suggest the following conjecture.
	
	\begin{conj}\label{conj 1}
		For every subset $\Omega \subset \R^{2d}$ of finite positive measure it holds
		\begin{equation}\label{eq : quantitative estimate for HS norm conj}
			\|L_{\Omega}\|_{\mathrm{HS}}^2 \leq \| L_{\Omega^*} \|_{\mathrm{HS}}^2 - c \tilde{\beta}(|\Omega|) \alpha[\Omega]^2,
		\end{equation}
		where
		\begin{equation*}\label{eq : expression of beta conj}
			\tilde{\beta}(t) = \left\{
			\begin{aligned}
				t^{2+\frac{1}{d}},\quad & \text{for } 0 < t \le 1\\
				t^{2-1/2d}, \quad & \text{for } t > 1
			\end{aligned}\right.
		\end{equation*}
		for some constant $c> 0$ depending only on $d$.
	\end{conj}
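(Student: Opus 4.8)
The plan is to keep the layer-cake decomposition used in the proof of Proposition~\ref{prop : quantitative estimate for HS norm}, but to control the Riesz deficit at \emph{every} radius rather than only on the single scale where Theorem~\ref{th : Christ theorem} is effective. Setting
\[
D_R(\Omega):=\int_{\Omega^*\times\Omega^*}\chi_{B_R}(z-w)\,dz\,dw-\int_{\Omega\times\Omega}\chi_{B_R}(z-w)\,dz\,dw\ \geq 0,
\]
formula \eqref{eq : formula HS norm localization operators with characteristic function} together with $e^{-\pi t^2}=\int_0^{+\infty}\chi_{(0,R)}(t)\,2\pi Re^{-\pi R^2}\,dR$ gives the exact identity
\[
\|L_{\Omega^*}\|_{\mathrm{HS}}^2-\|L_{\Omega}\|_{\mathrm{HS}}^2=\int_0^{+\infty}D_R(\Omega)\,2\pi Re^{-\pi R^2}\,dR .
\]
For $|\Omega|>1$ the weight $2\pi Re^{-\pi R^2}$ is concentrated on radii $R=O(1)$, whereas Theorem~\ref{th : Christ theorem} produces a nontrivial lower bound on $D_R(\Omega)$ only when $R$ is comparable to the scale $|\Omega|^{1/2d}$ of the set; for large $|\Omega|$ that range lies in the exponentially small tail of the weight, which is exactly what forces the factor $e^{-c_2|\Omega|^{1/d}}$ in Proposition~\ref{prop : quantitative estimate for HS norm}. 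Hence the whole improvement from $\beta$ to $\tilde\beta$ must come from a quantitative control of $D_R(\Omega)$ in the regime $R\ll|\Omega|^{1/2d}$, and for $0<|\Omega|\le 1$ nothing new is needed, since there $\tilde\beta$ coincides with the already-established (and sharp) branch $t^{2+1/d}$.

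The second step is to produce such a control, which I would phrase as a \emph{scale-uniform quantitative isoperimetric inequality} for the nonlocal perimeter
\[
P_R(\Omega):=\int_{\R^{2d}\times\R^{2d}}\chi_{\Omega}(z)\,\chi_{\R^{2d}\setminus\Omega}(w)\,\chi_{B_R}(z-w)\,dz\,dw .
\]
Since $\int_{\Omega\times\Omega}\chi_{B_R}(z-w)\,dz\,dw=|B_R|\,|\Omega|-P_R(\Omega)$ and $|\Omega^*|=|\Omega|$, one has the clean reformulation $D_R(\Omega)=P_R(\Omega)-P_R(\Omega^*)$, so that $D_R(\Omega)\ge 0$ is precisely the isoperimetric inequality for $P_R$ and the bound we need is its stability counterpart. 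This is the content of Conjecture~\ref{conj 2}: a sharpening of Christ's Theorem~\ref{th : Christ theorem} of the form $D_R(\Omega)\gtrsim_d g(R,|\Omega|)\,\alpha[\Omega]^2$ with the correct joint dependence on $R$ and $|\Omega|$, valid uniformly down to $R\to 0^+$. I would attempt it by interpolating between two known regimes: the limit $R\to 0^+$, where $P_R(\Omega)$ is asymptotic to a dimensional constant times $R^{2d+1}\,\mathrm{Per}(\Omega)$ and the Fusco--Maggi--Pratelli quantitative isoperimetric inequality applies, and the middle scale $R\sim|\Omega|^{1/2d}$ governed by Theorem~\ref{th : Christ theorem}, patching the two so that the constant does not degenerate across scales.

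The third step is purely computational: insert the bound of Conjecture~\ref{conj 2} into the displayed integral and evaluate $\int_0^{+\infty}g(R,|\Omega|)\,2\pi Re^{-\pi R^2}\,dR$, whose convergence (guaranteed by the Gaussian) collapses the $R$-dependence into a single power of $|\Omega|$; the goal is to show that for $|\Omega|>1$ this is bounded below by a constant times $\tilde\beta(|\Omega|)\,\alpha[\Omega]^2$, thereby upgrading Proposition~\ref{prop : quantitative estimate for HS norm} to \eqref{eq : quantitative estimate for HS norm conj}.

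The main obstacle is squarely the second step, i.e. Conjecture~\ref{conj 2}. Christ's proof proceeds through a compactness and competitor-selection scheme whose constant $c_{d,\delta}$ deteriorates as $\delta\to 0$ --- that is, precisely as one leaves the middle band of radii --- so one cannot simply run his argument at small $R$. What is required is a genuinely scale-uniform stability estimate quantifying how the Fraenkel asymmetry $\alpha[\Omega]$ is ``seen'' by the kernel $\chi_{B_R}$ across \emph{all} radii simultaneously, rather than at a single scale; controlling the transition between the boundary regime $R\to 0^+$ and the bulk regime $R\sim|\Omega|^{1/2d}$, with constants independent of $|\Omega|$, is the delicate point on which the whole argument --- and hence Conjecture~\ref{conj 1} --- rests.
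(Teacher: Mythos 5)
Your proposal does not prove the statement, and indeed it cannot as written: the statement is a \emph{conjecture}, and the paper itself offers no proof of it. What the paper does offer is exactly the reduction you describe --- the remark following Conjecture~\ref{conj 2} says that ``arguing as in the proof of Proposition~\ref{prop : quantitative estimate for HS norm} it is easy to see that Conjecture~\ref{conj 2} implies Conjecture~\ref{conj 1}.'' Your first and third steps reproduce this faithfully, and your bookkeeping is correct: the layer-cake identity for the deficit is exact, and the reformulation $D_R(\Omega)=P_R(\Omega)-P_R(\Omega^*)$, via $\int_{\Omega\times\Omega}\chi_{B_R}(z-w)\,dz\,dw=|B_R|\,|\Omega|-P_R(\Omega)$, is right and consistent with the fractional-perimeter discussion in Remark~\ref{rem frank}. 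But the entire weight of the argument then rests on your second step, which is precisely the open Conjecture~\ref{conj 2}; your suggested ``patching'' of the Fusco--Maggi--Pratelli regime ($R\to 0^+$) with Christ's middle-scale regime is exactly the part that nobody knows how to do --- the paper points out that the uniform-in-$s$ stability for fractional perimeters is known only with constants depending on $s_0$ \cite{figalli2015isoperimetry}, and the $d=1$ case of Conjecture~\ref{conj 2} resolved by Christ \cite{christ2019one} does not help here, since the time-frequency application needs the conjecture in dimension $2d\geq 2$. So the gap is real and irreducible at present, as you candidly acknowledge: you have reduced one conjecture to another, which is the state of the art, not a proof.

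There is also a concrete quantitative problem with your third step that you would have hit had you carried out the computation. Inserting Conjecture~\ref{conj 2}, stated in dimension $2d$, into the layer-cake integral gives, for $|\Omega|>1$,
\begin{equation*}
\int_0^{+\infty} D_R(\Omega)\, 2\pi R e^{-\pi R^2}\, dR \;\gtrsim\; \alpha[\Omega]^2\, |\Omega|^{1-\frac{1}{2d}} \int_0^{c|\Omega|^{1/2d}} R^{2d+2} e^{-\pi R^2}\, dR \;\gtrsim\; |\Omega|^{1-\frac{1}{2d}}\,\alpha[\Omega]^2,
\end{equation*}
since $(|B_R|/|\Omega|)^{1+1/2d}|\Omega|^2 = c\, R^{2d+1}|\Omega|^{1-1/2d}$; that is, the route through Conjecture~\ref{conj 2} yields the branch $t^{1-1/2d}$, \emph{not} the printed $t^{2-1/2d}$, and the discrepancy is a full factor of $t$. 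Moreover, the paper's own saturating example $\Omega_r=\Omega_r^1\cup\Omega_r^2$ has deficit at most $c''|\Omega_r|^{1-1/2d}$ with $\alpha[\Omega_r]$ bounded below independently of $r$, so no inequality of the form \eqref{eq : quantitative estimate for HS norm conj} can hold with $\tilde\beta(t)=t^{2-1/2d}$ for large $t$: the exponent in the statement (and in the preceding claim ``$\beta(t)\leq ct^{2-1/2d}$'') is evidently a misprint for $t^{1-1/2d}$, which is what the example actually forces and what Conjecture~\ref{conj 2} would deliver. Your plan, if Conjecture~\ref{conj 2} were available, would therefore prove the corrected statement; taken literally against the printed $\tilde\beta$, its final step is unattainable.
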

	In view of the above connection with Christ's result (Theorem \ref{th : Christ theorem}), this also suggests the following conjecture, of independent interest. 	We observe that Frank and Lieb arrived at the same conjecture when working on their papers \cite{frank2019note,frank2019proof} (private communication). 
	
	\begin{conj}\label {conj 2}
		Let $0<\delta<1$. There exists a constant $c_{d,\delta}>0$ such that, for all balls $B\subset\mathbb{R}^d$ centered at the origin, and all subsets $\Omega\subset \mathbb{R}^d$ of finite measure satisfying 
		\[
		\frac{|B|^{1/d}}{2|\Omega|^{1/d}}\leq 1- \delta
		\]
		we have 
		\[
		\frac{1}{2}\int_{\Omega\times \Omega} \chi_B(x-y)\, dx\, dy\leq \frac{1}{2}\int_{\Omega^\ast\times \Omega^\ast} \chi_B(x-y)\, dx\, dy-c_{d,\delta} (|B|/|\Omega|)^{1+1/d} |\Omega|^2\,\alpha[\Omega]^2.
		\]
		\par\medskip 
	\end{conj}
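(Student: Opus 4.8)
The plan is to read Conjecture \ref{conj 2} as a \emph{quantitative isoperimetric inequality for a nonlocal perimeter} and to interpolate between Christ's theorem (Theorem \ref{th : Christ theorem}) in the regime where $B$ is comparable to $\Omega$ and the sharp local quantitative isoperimetric inequality in the regime where $B$ is small. First I would note that a ball $B$ centered at the origin is of the form $B_\rho$, and, exactly as in the reduction preceding the statement, rescale $\Omega$ so that $\Omega^\ast=B_1$; writing $\omega_d=|B_1|$ and using the scale invariance of $\alpha[\Omega]$, the claim reduces to the following: for each $\delta\in(0,1)$ there is $c_{d,\delta}>0$ such that
\begin{equation*}
 D_\rho(\Omega):=\int_{B_1\times B_1}\chi_{B_\rho}(x-y)\,dx\,dy-\int_{\Omega\times\Omega}\chi_{B_\rho}(x-y)\,dx\,dy\geq c_{d,\delta}\,\rho^{\,d+1}\,\alpha[\Omega]^2
\end{equation*}
for all $\Omega$ with $|\Omega|=\omega_d$ and all $0<\rho\le 2(1-\delta)$. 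The key remark is that $D_\rho$ equals, up to the factor $1/2$, the difference of the \emph{nonlocal perimeters}
\begin{equation*}
 \mathrm{Per}_\rho(\Omega)=\iint_{\R^d\times\R^d}|\chi_\Omega(x)-\chi_\Omega(y)|\,\chi_{B_\rho}(x-y)\,dx\,dy,
\end{equation*}
that is $D_\rho(\Omega)=\tfrac12\big(\mathrm{Per}_\rho(\Omega)-\mathrm{Per}_\rho(B_1)\big)$; since Riesz' rearrangement inequality (Theorem \ref{th : Riesz rearrangement inequality}) already gives $\mathrm{Per}_\rho(\Omega)\ge \mathrm{Per}_\rho(B_1)$, the conjecture is precisely a stability estimate for $\mathrm{Per}_\rho$ with sharp dependence on the interaction range $\rho$.

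I would then split according to the size of $\rho$. For $\rho$ bounded below, say $\rho\in[\rho_0,2(1-\delta)]$ with $\rho_0=\rho_0(d)$ a fixed small threshold, the ratio $|B_\rho|^{1/d}/(2|\Omega|^{1/d})=\rho/2$ lies in a compact subinterval of $(0,1)$, so Christ's theorem applies and yields $D_\rho(\Omega)\ge c\,\alpha[\Omega]^2$; as $\rho$ is bounded, this is stronger than the desired bound and disposes of this range. The substantial case is $\rho\to0^+$, and here the exponent $\rho^{d+1}$ is forced by the asymptotics $\mathrm{Per}_\rho(\Omega)=C_d\,\rho^{\,d+1}P(\Omega)+o(\rho^{d+1})$ as $\rho\to0$, where $P(\Omega)$ is the distributional perimeter and $C_d>0$ is explicit (a Bourgain--Brezis--Mironescu/D\'avila-type characterization of $BV$, consistent with the saturating examples of Section \ref{sec : some remarks}). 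Heuristically,
\begin{equation*}
 \rho^{-(d+1)}D_\rho(\Omega)\longrightarrow \tfrac{C_d}{2}\big(P(\Omega)-P(B_1)\big),
\end{equation*}
so the sharp quantitative isoperimetric inequality of Fusco, Maggi and Pratelli, namely $P(\Omega)-P(B_1)\ge c_d\,\alpha[\Omega]^2$, would deliver the conjectured estimate in the limit.

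Turning this heuristic into a bound that is \emph{uniform} in $\rho$ is, I expect, the main obstacle. The difficulty is that the integrand of $\mathrm{Per}_\rho(\Omega)-\mathrm{Per}_\rho(B_1)$ is not sign-definite, so $D_\rho$ cannot be bounded from below termwise by $\rho^{d+1}$ times the local perimeter deficit: the convergence $\rho^{-(d+1)}\mathrm{Per}_\rho\to C_dP$ must be made quantitative and, crucially, must not be allowed to degenerate on the oscillatory or disconnected near-optimal competitors that dominate the small-$\rho$ regime. I foresee two routes. The first is to revisit the proof of the quantitative isoperimetric inequality for nonlocal perimeters of Figalli, Fusco, Maggi, Millot and Morini and to track the dependence of the stability constant on the kernel, verifying that after normalization by $\rho^{d+1}$ it stays bounded away from $0$ as $\rho\to0$. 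The second is a compactness/contradiction argument: assuming failure along sequences $\rho_n\to0$ and $\Omega_n$, one normalizes the deficit by $\rho_n^{d+1}$ and combines a quantitative D\'avila-type estimate with the machinery behind the local quantitative isoperimetric inequality (reduction to nearly spherical sets, or the selection principle of Cicalese and Leonardi) to extract a limit set contradicting Fusco--Maggi--Pratelli. In either route the delicate point is a uniform control of the $o(\rho^{d+1})$ remainder over all admissible $\Omega$, presumably requiring an a priori regularization of $\Omega$ at the scale $\rho$; this is exactly the step that makes Conjecture \ref{conj 2} genuinely harder than Theorem \ref{th : Christ theorem}.
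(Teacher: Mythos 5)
This statement is labeled as a \emph{conjecture} in the paper: the authors do not prove it, they only give supporting evidence (the example in Section \ref{sec : some remarks} that saturates the claimed rate, the fact that the case $d=1$ is due to Christ \cite{christ2019one}, and Remark \ref{rem frank}, where the conjecture is shown to \emph{imply} the sharp quantitative isoperimetric inequality via D\'avila's theorem). So there is no proof in the paper to compare yours against, and the real question is whether your proposal actually closes the conjecture. It does not, and you say so yourself.

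Your reduction to $|\Omega|=|B_1|$ and the identity $D_\rho(\Omega)=\tfrac12\bigl(\mathrm{Per}_\rho(\Omega)-\mathrm{Per}_\rho(B_1)\bigr)$ are correct, and the regime $\rho\in[\rho_0,2(1-\delta)]$ is indeed disposed of by Theorem \ref{th : Christ theorem} (with a two-sided parameter $\delta'$ depending on $\rho_0$ and $\delta$). But the entire content of the conjecture is the regime $\rho\to0^+$ with a constant \emph{uniform in} $\rho$, and there your argument stops at a heuristic. Note that the logical direction matters: the paper's Remark \ref{rem frank} derives the Fusco--Maggi--Pratelli inequality \emph{from} the conjecture; your plan attempts the converse, deducing the small-$\rho$ case from the local inequality via the asymptotics $\rho^{-(d+1)}\mathrm{Per}_\rho\to C_d P$. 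That converse is precisely what is open. Your ``route one'' (tracking the kernel dependence in Figalli--Fusco--Maggi--Millot--Morini) runs into exactly the obstruction the paper records at the end of Remark \ref{rem frank}: the known stability constant for the fractional perimeter is $c(d,s_0)$ for $s\in(s_0,1)$ and is not known to remain bounded away from $0$ uniformly as $s\to1^-$, which corresponds to the uniformity in $\rho\to0^+$ you need. Your ``route two'' (compactness/selection principle) is not carried out, and the step you flag --- uniform control of the $o(\rho^{d+1})$ remainder over all admissible competitors, including disconnected or oscillatory ones --- is the genuine missing idea, not a technicality. As it stands, the proposal is a reasonable research program and correctly identifies where the difficulty lies, but it is not a proof of the statement.
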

	Indeed, arguing as in the proof of Proposition \ref{prop : quantitative estimate for HS norm} it is easy to see that Conjecture \ref{conj 2} implies Conjecture \ref{conj 1}. Also, in dimension $d=1$ Conjecture \ref{conj 2} was positively solved by Christ \cite{christ2019one}.
	
\begin{remark}\label{rem frank} Rupert Frank pointed out to us that  Conjecture \ref{conj 2} easily implies the sharp  isoperimetric inequality in quantitative form (first proved by Fusco, Maggi and Pratelli \cite{fusco2008} by rearrangement techniques), that is
\[
P(\Omega)\geq P(\Omega^\ast)+c_d |\Omega|^{\frac{d-1}{d}}\alpha[\Omega]^2
\]
for $0<|\Omega|<\infty$, where $P(\Omega)$ is the (distributional) perimeter of $\Omega$.

Indeed, it follows from \cite{davila2002} that for every bounded measurable set $\Omega\subset\R^d$ of finite perimeter we have 
	\begin{equation}\label{unomenos}
	\lim_{s\to 1^-} (1-s)P_s(\Omega)=K_d P(\Omega)
	\end{equation}
	where, for $s\in (0,1)$, 
	\[
	P_s(\Omega):=\int_\Omega\int_{\Omega^c} \frac{1}{|x-y|^{d+s}}\, dx\, dy
	\]
	is the fractional $s$-perimeter of $\Omega$ and $K_d>0$ is a constant depending only on the dimension. Hence it is sufficient to obtain a suitable quantitative isoperimetric inequality for $P_s$, that is uniform in $s$ as $s\to 1^-$. To this end, one can argue as in the proof of Proposition \ref{prop : quantitative estimate for HS norm}. Precisely, using the conjecture one arrives at integrating $R^{-s}$ on the interval $(0,c'_d|\Omega|^{1/d})$, which gives 
	\[
	P_s(\Omega)\geq P_s(\Omega^\ast)+\frac{c_d}{1-s} |\Omega|^{\frac{d-s}{d}}\alpha[\Omega]^2
	\]
	uniformly with respect to $s\in (0,1)$, namely with an (explicit) constant $c_d>0$ independent of $s$, which is precisely what we need. 
	
	We observe that the latter inequality was proved by Figalli, Fusco, Maggi, Millot and Morini \cite{figalli2015isoperimetry} for $s\in (s_0,1)$, for any $s_0\in (0,1)$, with a (non-explicit) constant $c(d,s_0)$ depending also on $s_0$ (in place of $c_d$ above). 
\end{remark}
	
	
	\section{The Hilbert-Schmidt norm of wavelet localization operators}\label{sec : HS norm of wavelet localization operators}
	In this section we are going to prove that a result analogous to Corollary \ref{cor : inequality for HS norm after rearrangement for characteristic functions} holds also for wavelet localization operators. Just for this section, we consider the following normalization for the Fourier transform (which is a common choice in the wavelet literature): 
	\begin{equation*}
		\widehat{f}(\omega) = \dfrac{1}{\sqrt{2 \pi}} \int_{\R} f(t) e^{-i \omega t} \, dt.
	\end{equation*}
	The role that was previously played by the Gaussian window is now taken by the so-called Cauchy wavelet $\psi_{\beta}$, which for $\beta>0$ is defined by
	\begin{equation}\label{eq : definition Cauchy wavelet}
		\widehat{\psi_{\beta}} (\omega) = \dfrac{1}{c_\beta} \chi_{[0,+\infty)}(\omega) \omega^{\beta} e^{-\omega},
	\end{equation}
	where $c_{\beta} > 0$ is given by $c_{\beta}^2 = 2\pi 2^{-2\beta} \Gamma(2\beta)$ and is chosen so that $\| \widehat{\psi_{\beta}} \|^2_{L^2(\R_+, d\omega/\omega)} = 1/(2\pi)$ (where $\R_+ = (0,+\infty))$. In this context, the Hilbert space $\H$ is given by the Hardy space $H^2(\R)$, that is the space of functions of $L^2(\R)$ whose Fourier transform is supported in $[0,+\infty)$, endowed with the $L^2$-norm. In particular, it holds that $\psi_{\beta} \in H^2(\R)$ for every $\beta > 0$. The ``coherent states'' here are given by
	\begin{equation*}
		\varphi_z(t) := \pi(z) \psi_{\beta}(t) \coloneqq \dfrac{1}{\sqrt{s}} \psi_{\beta} \left(\dfrac{t-x}{s}\right), \quad t \in \R,
	\end{equation*}
	with $z = (x,s) \in \R \times \R_+$, regarded as the hyperbolic space introduced in Section \ref{sec : notation and preliminaries}. We remark that $\pi(z)$ is a unitary representation on $H^2(\R)$ of the affine (``$ax+b$'') group. 
	The transform related with these coherent states is the \emph{wavelet transform} $\W_{\psi_{\beta}}$, which for $f \in H^2(\R)$ is given by
	\begin{equation*}
		\W_{\psi_{\beta}} f (z) \coloneqq \dfrac{1}{\sqrt{s}} \int_{\R} f(t) \overline{\psi_{\beta}\left(\dfrac{t-x}{s}\right)} \, dt, \quad z = (x,s) \in \R \times \R_+.
	\end{equation*} 
	With the above normalization of $\psi_{\beta}$, $\W_{\psi_{\beta}} : H^2(\R) \rightarrow L^2(\R \times \R_+, d\nu)$ is an isometry, with $d\nu=dx\, ds/s^2$.
	
	Now, in this context, the kernel appearing in \eqref{eq : formula norm HS} is given by
	\begin{equation*}
		|\langle \varphi_z, \varphi_w \rangle|^2 = |\langle \psi_{\beta}, \pi(z^{-1} \circ w) \psi_{\beta} \rangle|^2, \quad z,w \in \R \times \R_+,
	\end{equation*}
	where $\circ$ is the product in the affine group ($z \circ w = (x,s) \circ (y,t) = (x + ys, st)$) while $z^{-1}$ is the inverse of $z$ in the group ($z^{-1} = (-x/s, 1/s)$).

	An essential property for the proof of Proposition \ref{prop : HS norm is increasing after rearrangement} was that the function $e^{-\pi t^2}$ appearing in the kernel of \eqref{eq HSstima} is strictly decreasing. A similar fact holds true in this context, namely $
	|\langle \psi_{\beta}, \pi(z) \psi_{\beta} \rangle|^2 = |\W_{\psi_{\beta}} \psi_{\beta}(z)|^2
	$ is strictly symmetric decreasing  around $(0,1)$ (unit of the group) with respect to the Poincaré metric of $\R \times \R_+$.
	
	Indeed, we have:
	\begin{align*}
		\W_{\psi_{\beta}} \psi_{\beta}(z) &= \dfrac{1}{\sqrt{s}} \int_{\R} \psi_{\beta}(t) \overline{\psi_{\beta}\left(\dfrac{t-x}{s}\right)} \, dt \\
		&= \sqrt{s}\int_{\R} \widehat{\psi_{\beta}}(\omega) \overline{e^{-i x \omega}  \widehat{\psi_{\beta}}(s\omega)} \, d\omega \\
		&= \dfrac{s^{\beta + \frac12}}{c_{\beta}^2} \int_0^{+\infty} \omega^{2\beta} e^{-[(1+s) - ix]\omega} \, d\omega \\
		&= \dfrac{s^{\beta + \frac12}}{c_{\beta}^2} [(1+s) - ix]^{-2\beta-1} \Gamma(2\beta + 1),
	\end{align*}
	hence, for some $C > 0$, it holds
	\begin{equation*}
		|\W_{\psi_{\beta}} \psi_{\beta}(z)|^2 = C s^{2\beta + 1} \left[(1+s)^2 + x^2\right]^{-2\beta - 1}.
	\end{equation*}
	On the other hand, identifying $\R\times\R_+$ with $\C_+$ via $z=x+is$, we have
	\begin{equation*}
		\dfrac{4s}{(1+s)^2 + x^2} = 1 - \left\lvert \dfrac{z-i}{z+i} \right\rvert^2,
	\end{equation*}
	therefore 
	\begin{equation*}
		|\W_{\psi_{\beta}} \psi_{\beta}(z)|^2 = \varrho(d_H(z,i)),
	\end{equation*}
	where $\varrho(t) = C[1-\tanh(t/2)]$ is a strictly decreasing function $[0,+\infty)\to [0,+\infty)$. 
	
	Hence, for the kernel in \eqref{eq : formula norm HS} we have
	\begin{equation*}
		|\langle \varphi_z, \varphi_w \rangle|^2 = |\langle \psi_{\beta}, \pi(z^{-1} \circ w) \psi_{\beta} \rangle|^2 = |\W_{\psi_{\beta}}(z^{-1} \circ w)|^2 = \varrho(d_H(z,w)).
	\end{equation*}
	We can therefore state the analog of Proposition \ref{prop : HS norm is increasing after rearrangement} for wavelet localization operators. The proof is exactly the same, with the only difference that one has to use Theorem \ref{th : Riesz rearrangement inequality hyperbolic} in place of Theorem \ref{th : Riesz rearrangement inequality}.  
	\begin{prop}\label{prop : HS norm is increasing after rearrangement wavelet case}
		Let $F \in L^1(\C_+, \nu) + L^2(\C_+, \nu)$. Then
		\begin{equation*}\label{eq : inequality for HS norm after rearrangement wavelet case}
			\|L_{F}\|_{\mathrm{HS}} \leq \| L_{|F|^*} \|_{\mathrm{HS}},
		\end{equation*}
		where $|F|^*$ is the symmetric-decreasing rearrangement of $|F|$. Equality occurs if and only if $F(z) = e^{i \theta} |F|^*(az+b)$ for almost every $z \in \C_+$ and some $a>0$ and $b \in\R$.
	\end{prop}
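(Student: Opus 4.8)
The plan is to transcribe, almost verbatim, the proof of Proposition \ref{prop : HS norm is increasing after rearrangement}, replacing the Euclidean Gaussian kernel by the hyperbolic one. First I would check that the abstract Hilbert--Schmidt formula \eqref{eq : formula norm HS} is available here. Conditions \ref{coherent state map is weakly measurable} and \ref{coherent state map is bounded} hold because $z\mapsto \W_{\psi_\beta}f(z)=\langle f,\varphi_z\rangle$ is continuous and $\|\varphi_z\|=\|\pi(z)\psi_\beta\|=\|\psi_\beta\|$ is constant, $\pi(z)$ being unitary; moreover, since $\W_{\psi_\beta}\colon H^2(\R)\to L^2(\C_+,\nu)$ is an isometry, the resolution of the identity holds, hence so does the Bessel inequality \eqref{eq : Bessel type inequality}. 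Thus Propositions \ref{prop : formula for trace and HS norm with F in L1} and \ref{prop : norm HS for F in L2} apply, and inserting the identity $|\langle\varphi_z,\varphi_w\rangle|^2=\varrho(d_H(z,w))$ established above gives
\[
\|L_F\|_{\mathrm{HS}}^2=\int_{\C_+\times\C_+}F(z)\,\varrho(d_H(z,w))\,\overline{F(w)}\,d\nu(z)\,d\nu(w)
\]
for every $F\in L^1(\C_+,\nu)+L^2(\C_+,\nu)$.

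From here the argument is a two-step estimate. The triangle inequality gives
\[
\|L_F\|_{\mathrm{HS}}^2\leq\int_{\C_+\times\C_+}|F(z)|\,\varrho(d_H(z,w))\,|F(w)|\,d\nu(z)\,d\nu(w),
\]
with equality precisely when $F(z)\overline{F(w)}=|F(z)|\,|F(w)|$ for a.e.\ $(z,w)$, that is, when $F=e^{i\theta}|F|$ a.e.\ for some $\theta\in\R$. Then, since $\varrho$ is strictly decreasing (as shown above), I apply the hyperbolic Riesz rearrangement inequality (Theorem \ref{th : Riesz rearrangement inequality hyperbolic}) with $f=h=|F|$ and $g=\varrho$, bounding the last integral by the same integral with $|F|$ replaced by $|F|^*$; the resulting right-hand side is exactly $\|L_{|F|^*}\|_{\mathrm{HS}}^2$. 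Chaining the two inequalities yields the claim.

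For the equality case I would combine the two equality conditions. The first forces the constant phase $F=e^{i\theta}|F|$. For the second, the equality clause of Theorem \ref{th : Riesz rearrangement inequality hyperbolic} applies because $\varrho$ is strictly symmetric decreasing, $|F|$ is not identically zero, and the relevant integrals are finite, being equal to $\|L_{|F|^*}\|_{\mathrm{HS}}^2<\infty$; it yields $|F(z)|=|F|^*(az+b)$ for a.e.\ $z$ and some $a>0$, $b\in\R$. Together these give $F(z)=e^{i\theta}|F|^*(az+b)$, which is the assertion.

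I expect no genuine obstacle, since the structure mirrors the Euclidean case exactly; the only points demanding care are bookkeeping ones. One must verify that $|F|^*\in L^1(\C_+,\nu)+L^2(\C_+,\nu)$, so that $L_{|F|^*}$ is genuinely Hilbert--Schmidt and the right-hand integral is finite: this holds because membership in $L^1+L^2$ depends only on the distribution function and $|F|^*$ is equimeasurable with $|F|$. One must also transcribe the equality condition correctly, noting that in the hyperbolic setting the Euclidean translations of Theorem \ref{th : Riesz rearrangement inequality} are replaced by the action $z\mapsto az+b$ of the affine group, which is precisely the source of the $|F|^*(az+b)$ appearing in the statement.
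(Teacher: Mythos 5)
Your proposal is correct and follows exactly the route the paper takes: the paper itself disposes of this proposition by noting that the proof of Proposition \ref{prop : HS norm is increasing after rearrangement} carries over verbatim once the kernel is identified as $\varrho(d_H(z,w))$ with $\varrho$ strictly decreasing and Theorem \ref{th : Riesz rearrangement inequality hyperbolic} is used in place of Theorem \ref{th : Riesz rearrangement inequality}. Your additional bookkeeping (verifying the hypotheses of Propositions \ref{prop : formula for trace and HS norm with F in L1} and \ref{prop : norm HS for F in L2}, and that $|F|^*$ stays in $L^1+L^2$ by equimeasurability) is accurate and only makes explicit what the paper leaves implicit.
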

	\section*{Acknowledgments}
	We would like to thank Rupert Frank for interesting discussions and for his observations about Conjecture \ref{conj 2} in Remark \ref{rem frank}.

	\appendix
	\section{Remarks on the uniqueness of the extremizers in Riesz' rearrangement inequality}
	In Theorem \ref{th : Riesz rearrangement inequality} the cases of equality in Riesz' rearrangement inequality were characterized under the hypothesis that $g$ is \emph{strictly} decreasing (see also \cite{burchard1996cases} for a complete study of the cases of equality). We also stressed this fact in the proof of Proposition \ref{prop : HS norm is increasing after rearrangement}. 
	
	In this appendix, for the benefit of the non-expert reader,  we would like to clarify the necessity of this hypothesis by showing that if $g$ is not strictly decreasing we can {\it always} find characteristic functions $f$ and $h$ that achieve equality in \eqref{eq : Riesz rearrangement inequality} even though $f$ (say) is not, up to a translation, symmetric decreasing.
	
	If we assume that $g$ is decreasing but not strictly then there exist $0 \leq r < R$ such that $g$ is constant on the annulus $B_R \setminus B_r$ (where $B_r = \emptyset$ if $r=0$). Then, we consider the following set
	\begin{equation*}
		\Omega = \{x \in \R^d \colon |x| < r + 2\delta,\, R-4\delta < |x| < R - 2\delta\},
	\end{equation*}
	where $\delta$ is small enough to have $r+2\delta < R - 4\delta$ and we set \[
	f = \chi_{\Omega},\qquad h = \chi_{B_\delta}.
	\]
	
	\begin{figure}[h]
		\centering
		\begin{tikzpicture}[scale=1]
			\coordinate (T) at (-7,0);
			\coordinate (y) at (0.07, 0.04);
			\draw[->, shift={(T)}] (-3.7,0) -- (3.7,0);
			\draw[->, shift={(T)}] (0,-0.5) -- (0,2.5);
			\draw[blue, shift={(T)}] plot[domain=-1:1, samples=100] (\x, {2-(\x)^2});
			\draw[blue, shift={(T)}] (-2,1) -- (-1,1);
			\draw[blue, shift={(T)}] (1,1) -- (2,1);
			\draw[blue, shift={(T)}] plot[domain=2:3.5, samples=100] (\x, {exp(-\x+2)});
			\draw[blue, shift={(T)}] plot[domain=-3.5:-2, samples=100] (\x, {exp(\x+2)});
			\draw[shift={(T)}] (1, 0.1) -- (1, -0.1) node[below] {$r$};
			\draw[shift={(T)}] (2, 0.1) -- (2, -0.1)node[below] {$R$};
			\draw[shift={(T)}] (0,2) node[above right] {$g$};
			\draw[fill opacity=0.5, fill=gray, draw=black, even odd rule] (0,0) circle (1.2) (0,0) circle (1.6) (0,0) circle (1.8);
			\draw[dashed] (0,0) circle (1);
			\draw[dashed] (0,0) circle (2);
			\draw[|<->|] (-1,0) -- (0,0);
			\draw (-0.5,0) node[above] {$r$};
			\draw[|<->|] (0,0) -- (2,0);
			\draw (1,0) node[above left] {$R$};
			\draw[->] (-2.5,0) -- (2.5,0);
			\draw[->] (0,-2.5) -- (0,2.5);		
		\end{tikzpicture}
		\caption{Example of $g$ that is symmetric decreasing but not strictly (left, where $g(x_1,0)$ is represented) and the corresponding set $\Omega$ in dimension $d=2$ (right).}
		\label{im : set Omega appendix}
	\end{figure}
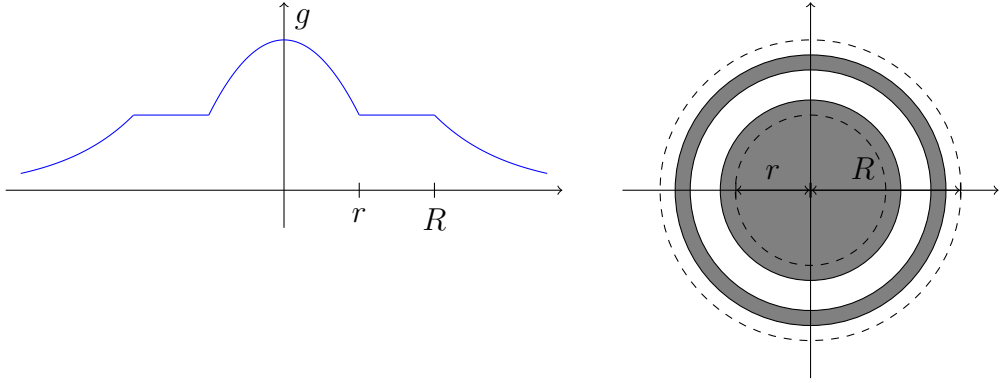
	Since $h(-y)=h(y)$, the left-hand side of \eqref{eq : Riesz rearrangement inequality} can be written as follows:
	\begin{equation*}
		\int_{\R^d} \int_{\R^d} f(x) g(x-y) h(y) \, dx dy = \int_{\R^d} g(x) (f\ast h)(x)\, dx. 	\end{equation*}
	Now we observe that (obviously) $h^\ast=h$ and that the functions $(f\ast h)(x)$ and $(f^\ast \ast h^\ast)(x)$\medskip
	\begin{itemize}
	\item both vanish on $\R^d\setminus B_R$;\medskip
	\item coincide on $B_r$; \medskip 
	\item have the same integral: $\int_{\R^d} (f\ast h)(x)\, dx= \int_{\R^d} (f^\ast\ast h^\ast)(x)\, dx=|B_\delta|\cdot|\Omega|$.
	\end{itemize}\medskip
	As a consequence, we have 
	\[
	\int_{B_R\setminus B_r} (f\ast h)(x)\, dx= \int_{B_R\setminus B_r} (f^\ast\ast h^\ast)(x)\, dx.
	\]
Hence, since $g(x)=c$ (constant) for $x\in B_R\setminus B_r$, we have

	\begin{align*}
		\int_{\R^d} \int_{\R^d} f(x) g(x-y) h(y) \, dx dy &=\int_{B_r} g(x) (f\ast h)(x)\, dx+ \int_{B_R\setminus B_r} g(x) (f\ast h)(x)\, dx\\
		&= \int_{B_r} g(x) (f\ast h)(x)\, dx+ c \int_{B_R\setminus B_r} (f\ast h)(x)\, dx\\
		&= \int_{B_r} g(x) (f\ast h)(x)\, dx+ c \int_{B_R\setminus B_r} (f^\ast\ast h^\ast)(x)\, dx\\
		&= \int_{B_r} g(x) (f\ast h)(x)\, dx+ \int_{B_R\setminus B_r}g(x) (f^\ast\ast h^\ast)(x)\, dx\\
		&= \int_{\R^{d}} \int_{\R^d} f^*(x) g(x-y) h^*(y) \, dx\,dy.
	\end{align*}
	

\end{document}